\newcommand\BibTeX{{\rmfamily B\kern-.05em \textsc{i\kern-.025em b}\kern-.08em
T\kern-.1667em\lower.7ex\hbox{E}\kern-.125emX}}
\definecolor{grau}{rgb}{.9,0.9,0.9} 
\definecolor{dunkelgrau}{rgb}{.7,0.7,0.7} 
\definecolor{rot}{rgb}{0.89,0,0.10} 
\newtheorem{lemma}{Lemma}[section]
\newtheorem{theorem}[lemma]{Theorem}
\newtheorem{proposition}[lemma]{Proposition}
\newtheorem{corollary}[lemma]{Corollary}
\newtheorem{definition}[lemma]{Definition}
\newtheorem{remark}[lemma]{Remark}
\newcommand{\R}{\ensuremath{\mathbb{R}}}
\newcommand{\V}{\ensuremath{\mathcal{V}}}
\newcommand{\E}{\ensuremath{\mathcal{E}}}
\newcommand{\Hnorm}{\ensuremath{\mathcal{H}}}
\newcommand{\id}{\ensuremath{\mbox{id}}}
\newcommand{\K}{\ensuremath{\mathcal{K}}}
\newcommand{\Kinf}{\ensuremath{\mathcal{K}_\infty}}
\begin{document}

\runningheads{S.~Dashkovskiy, M.~Kosmykov}{Reduction of the small gain condition for large-scale interconnections}

\title{Reduction of the small gain condition for large-scale interconnections}

\author{S.~Dashkovskiy\affil{1}\corrauth, M.~Kosmykov\affil{2}}

\address{\affil{1}University of Applied Sciences Erfurt, Department of Civil Engineering, Altonaer Straße 25, 99085 Erfurt, Germany\break
\affil{2}University of Bremen, Centre of Industrial Mathematics, P.O.Box 330440, 28334 Bremen, Germany}

\corraddr{E-mail: sergey.dashkovskiy@fh-erfurt.de}

\cgs{Michael Kosmykov is supported by the German Research Foundation (DFG) as part of the Collaborative Research Center 637 "Autonomous Cooperating Logistic Processes: A Paradigm Shift and its Limitations".}

\begin{abstract}
The small gain condition is sufficient for input-to-state stability (ISS) of interconnected
systems. However, verification of the small gain condition requires large amount of computations in
the case of a large size of the system. To facilitate this procedure we aggregate the subsystems and the gains between the subsystems
that belong to certain interconnection patterns (motifs) using three heuristic rules. These
rules are based on three motifs: sequentially connected nodes, nodes connected in parallel and almost
disconnected subgraphs. Aggregation of these motifs keeps the main structure of the mutual influences
between the subsystems in the network. Furthermore, fulfillment of the reduced small gain condition
implies ISS of the large network. Thus such reduction allows to decrease the number of computations
needed to verify the small gain condition. Finally, an ISS-Lyapunov function for the large network can be
constructed using the reduced small gain condition. Applications of these rules is illustrated on an example.
\end{abstract}

\keywords{input-to-state stability; large-scale systems; Lyapunov methods; model reduction}

\maketitle

\vspace{-6pt}

\section{Introduction}
\vspace{-2pt}

Starting with the pioneering works \cite{JTP94}, \cite{JMW96}
 interconnection of input-to-state stable (ISS) systems has been studied by
many authors, see for example \cite{LaN03}, \cite{AnA07},
\cite{Cha05}, \cite{Ito08}. In particular, it is known that
cascades of ISS systems are ISS, while a feedback interconnection
of two ISS systems is in general unstable.  The first result of
the small gain type was proved in \cite{JTP94} for a feedback
interconnection of two ISS systems.  The Lyapunov version of this
result is given in \cite{JMW96}. These results were
generalized for an interconnection of $n\ge2$ systems in
\cite{DRW07}, \cite{DRW10}, \cite{JiW08} and \cite{KaJ09}. In particular, they state that if the so-called small gain condition $\Gamma(s)\not\geq s$ holds, then interconnection of ISS systems is ISS. Here the gain matrix $\Gamma$ describes an interconnection structure of the system.
 For recent results on the small gain conditions for a wider class of
interconnections we refer to \cite{JiW08}, \cite{ItJ07} and \cite{KaJ09}.
 In \cite{ItJ09} the authors consider necessary and sufficient small gain
 conditions for interconnections of two ISS systems in dissipative form.

Verification of the small gain condition  $\Gamma(s)\not\geq s$ requires large amount of analytical computations in the case of the large size  interconnected system.
 This procedure can be facilitated by reducing the size of the system, by applying a numerical method or applying a method that reduces the size of the gain matrix $\Gamma$ . 
  
Model reduction of linear large-scale systems is already a well-developed area. The most efficient approaches are balancing and moment matching (Krylov subspace methods), see \cite{Ant05}. In balancing methods state variables that are hard to control/observe are eliminated from the model. An approximation norm is usually given in terms of $\Hnorm_{\infty}$- or
$\Hnorm_2$-norms. In moment matching methods a function that matches certain moment of the Laurent series expansion is being looked for. 
These methods are computationally efficient in comparison with balancing methods, however provide no approximation error bounds. Usually, one uses a combination of both methods where first the large size is reduced by the moment matching methods and then the balancing method is applied.

On the contrary, the methods for the reduction of nonlinear systems are still in the development. As of today, there exist many different approaches that provide first steps in the direction of the reduction of nonlinear systems. However, these approaches are applied only to certain subclasses of nonlinear systems. The most known methods are an extension of the balancing and moment matching methods to nonlinear systems, proper orthogonal decomposition, singular perturbations theory, trajectory piecewise linear approach, Volterra methods and the theory of global attractors. The balancing methods \cite{Sch93}, \cite{LMG02} are applied to input-affine continuous-time nonlinear systems, and the moment matching to single-input single-output systems \cite{Ast10} and bilinear systems \cite{BrD10}. In the proper orthogonal decomposition (POD) \cite{HLB98}, \cite{Ast04}, \cite{HPS07} the original system is projected onto a subspace of a smaller dimension using the known set of data (snapshots). POD methods are usually applied to models describing physical systems. Singular perturbations theory \cite{BiA06}, \cite{KOS76} is used for the systems, where parameters evolve in different time scales ("slow" and "fast" parameters). This approach assumes aggregation of the variables evolving in the fast time scale. The trajectory piecewise linear approach \cite{ReW06} is mostly applied to input-affine systems. The system is linearized several times along a trajectory and the final model is constructed as a weighted sum of all local linearized reduced systems. In Volterra methods \cite{Phi03} the reduction is performed by taking into account the first several terms of the Polynomial expansion of a nonlinear function. In the theory of global attractors \cite{KKS10} one searches for a slow-manifold, inertial manifold or center manifold, on which a restricted dynamical system represents the "interesting" behaviour of the dynamical system.

Note that, if these methods will be directly applied to an application network, then information about the real physical objects of a network and of its structure will be, in general, lost. Therefore, a reduction method that preserves the main structure of the network is needed.

Structure preserving model reduction was studied in \cite{SVH08}, \cite{LKM03}, \cite{Sal05}. However, it is also applied only for particular classes of systems. 

Another possibility to decrease the number of analytical computations in verifying the small gain condition is an application of numerical methods \cite{Rueff07}, \cite{GeW11}. There is considered a local version of ISS. 

On the other hand,  to the best of our knowledge, there exist no approaches for the reduction of the size of the gain matrix $\Gamma$ in the small gain condition. In this paper we make the first attempt in this direction. On the other hand,  to the best of our knowledge, there exist no approaches for the reduction of the size of the gain matrix $\Gamma$ in the small gain condition. In this paper we make the first attempt in this direction. 
By the reduction we understand a reduction of the gain matrix, i.e. transition from the gain matrix $\Gamma$ of size $n$ to the matrix $\widetilde{\Gamma}$ of size $k<n$. 
To obtain the matrix $\widetilde{\Gamma}$ we propose to aggregate the subsystems and the gains between the subsystems that belong to certain interconnection patterns, so-called motifs \cite{motifs}. Aggregation of these motifs keeps the main structure of the mutual influences between the subsystems in the network. Thus the properties of the aggregated and the original models should be similar. This prompts us that ISS of the large-scale network can be established by checking the aggregated small gain condition corresponding to the smaller gain matrix $\widetilde{\Gamma}$.

In this paper we introduce three aggregation rules for the reduction of the gain matrix. These rules are based on three motifs: sequentially connected nodes, nodes connected in parallel and almost disconnected subgraphs. We establish that fulfillment of the reduced small gain condition implies ISS of the large network. Furthermore, we show how an ISS-Lyapunov function for the large network can be constructed using the so-called $\Omega$-path corresponding to the reduced small gain condition. 

In Section~\ref{sec:Notation} we introduce all necessary notation and recall the small gain condition. Then in Section~\ref{sec:Reduction} we show how the size of this condition can be reduced. This is illustrated on the example in  Section~\ref{sec:Example}. 
Conclusions are given in Section~\ref{sec:Conclusion}.

\section{Notation}\label{sec:Notation} 

\subsection{Vectors and spaces}

In the following we set ${\mathbb{R}}_{+}:=[0,\infty)$ and denote the
positive orthant ${\mathbb{R}}_{+}^n:= [0,\infty)^n$. The transpose of a
vector $x \in \mathbb{R}^n$ is denoted by $x^T$.  On ${\mathbb{R}}^n$ we
use the standard partial order induced by the positive orthant given by
\[
\begin{array}{l}
x\geq y\, \Longleftrightarrow\, x_i \geq y_i,\quad i=1,\ldots,n,\\
x>y\,\Longleftrightarrow\, x_i>y_i,\quad i=1,\ldots,n.
\end{array}\]
We write $x\not\geq y\,\Longleftrightarrow\, \exists
\,i\in\{1,\ldots,n\}: \, x_i<y_i.$ 

For a function $v:{\mathbb{R}}_{+}\to{\mathbb{R}}^{m}$ we
define its restriction to the interval $[s_1,s_2]$ by
\[v_{[s_1,\, s_2]}(t)=\left\{\begin{array}{ll}
v(t),& \mbox{if }t \in [s_1,s_2],\\
0,& \mbox{otherwise.}
\end{array}\right.\]

A continuous function $\alpha: \R_+\rightarrow\R_+$, where $\alpha(r)=0$ if and only if $r=0$, is called {\it positive definite}.
A function $\gamma:{\mathbb{R}}_{+}\to{\mathbb{R}}_{+}$ is said to be of
class $\cal{K}$ if it is continuous, strictly increasing and $\gamma(0)=0$. It
is of class $\cal{K}_{\infty}$ if, in addition, it is unbounded. Note that for
any $\alpha \in \cal{K}_{\infty}$ its inverse function $\alpha^{-1}$ always
exists and $\alpha^{-1}\in \cal{K}_{\infty}$.  A function $\beta:
{\mathbb{R}}_{+}\times{\mathbb{R}}_{+}\to{\mathbb{R}}_{+}$ is said to be
of class $\cal{KL}$ if, for each fixed $t$, the function $\beta(\cdot,t)$ is
of class $\cal{K}$ and, for each fixed $s$, the function $t\mapsto
\beta(s,t)$ is non-increasing and tends to zero for $t \to \infty$. By
$\id$ we denote the identity map.

Let $|\cdot|$ denote some norm in ${\mathbb{R}}^{n}$. The essential supremum
norm of a measurable function $\phi:\R_+\to \R^m$ is denoted by
${\|\phi\|}_{\infty}$. $L_{\infty}$ is the set of measurable functions for
which this norm is finite.

\subsection{Graphs}

We introduce also the notion of graphs from \cite{Ban09} and show how graphs can be described by matrices. A {\it directed graph with weights} consists of a finite {\it vertex set} $\V$ and an {\it edge set}
$\E$, where a {\it directed edge from vertex $i$ to vertex $j$} is an ordered pair
$(i,j)\in \E\subset \V \times \V$. The weights can be represented by a $|\V|\times
|\V|$ {\it weighted adjacency matrix} $A$, where $a_{ij}\geq 0$ denotes the weight of
the directed edge from vertex $i$ to vertex $j$. By convention $a_{ij}>0$, if
and only if $(i,j)\in \E$. We will denote a directed graph with weights of this
form by $G=(\V,\E,A)$. 
Additionally, we define
for each vertex $i$ the set of {\it successors} by
\begin{align}\label{graph_s}
S(i):=  \{j\,:\,(i,j)\in \E\}
\end{align}
and the set of {\it predecessors} by
\begin{align}\label{graph_p}
P(i):=  \{j\,:\,(j,i)\in \E\}.
\end{align}
A {\it path}\index{path} from vertex $i$ to $j$ is a sequence of distinct
vertices starting with $i$ and ending with $j$ such that there is a directed
edge between consecutive vertices. A directed graph is said to be {\it strongly connected}, if for any ordered pair $(i,j)$ of vertices, there is a
path which leads from $i$ to $j$. In terms of the weighted adjacency matrix
this is equivalent to the fact that $A$ is irreducible, \cite{BeP79}.

\subsection{Interconnected dynamical systems and input-to-state stability}

In this paper we study continuous dynamical systems. We consider a system
\begin{equation}\label{ws}
\dot{x}=f(x,u),\quad x \in {\mathbb{R}}^{n}, \quad
u\in{\mathbb{R}}^{m},
\end{equation}
and assume it  is forward complete, i.e., for all
initial values $x(0)\in {\mathbb{R}}^{n}$ and all essentially bounded
measurable inputs $u$ solutions $x(t) = x(t;x(0),u)$
exist for all positive times. Assume also that
for any initial value $x(0)$ and input $u$ the solution is unique.

We are interested in input-to-state stability (ISS) of systems of the form \eqref{ws}. We define ISS as follows, see \cite{Son89}. 
\begin{definition}[Input-to-state stability]
System of the form \eqref{ws} is called {\it input-to-state stable}
(ISS), if there exist functions $\beta\in\cal{KL}$ and
$\gamma\in\cal{K}$, such that for all $x(0)\in {\mathbb{R}}^{n}\,, u \in
L_{\infty}(\R_{+},\R^{m})$
\begin{eqnarray}\label{ws_iss}
|x(t)|\leq\max\{\beta(|x(0)|,t),\gamma({\|u\|}_{\infty})\}\,,\quad t\ge0.
\end{eqnarray}
\end{definition}

We assume that there are $n$ interconnected dynamical systems given by
\begin{equation}\label{is}
\begin{array}{ccc}
{\dot{x}}_1&=&f_1(x_1,\ldots,x_n,u_1)\\
&\vdots&\\
{\dot{x}}_n&=&f_n(x_1,\ldots,x_n,u_n)
\end{array}
\end{equation} where $x_i \in {\mathbb{R}}^{N_i}$, $u_i \in
{\mathbb{R}}^{m_i}$ and the functions $f_i:
{\mathbb{R}}^{\sum_{j=1}^{n}N_j +
m_i}\rightarrow{\mathbb{R}}^{N_i}$ are continuous and for all $r
\in {\mathbb{R}}$ are locally Lipschitz continuous in
$x={({x_1}^T,\ldots,{x_n}^T)}^T$ uniformly in $u_i$ for $|u_i|\leq
r$. This regularity condition for $f_i$ guarantees the existence
and uniqueness of solution for the $i$th subsystem for a given
initial condition and input $u_i$.

The interconnection \eqref{is} can be written as
\eqref{ws} with $x:=(x_1^T,\dots,x_n^T)^T$,\\
$u:=(u_1^T,\dots,u_n^T)^T$  and
\[f(x,u)=\left(f_1(x_1,\dots,x_n,u_1)^T,\ldots,
f_n(x_1,\dots,x_n,u_n)^T\right)^T.\]
If we consider the individual subsystems, we treat the state $x_j,  j
\neq i$ as an independent input for the $i$th subsystem.

The $i$th subsystem of \eqref{is} is ISS, if
there exist functions
$\beta_i$ of class $\cal{KL}$, $\gamma_{ij}$, $\gamma_i \in
\cal{K}_\infty\cup\{$0$\}$ such that for all initial values
$x_i(0)$ and inputs $u \in L_{\infty}(\R_{+},\R^{m})$ there exists a unique
solution $x_i(\cdot)$ satisfying for all $t\ge0$
\begin{equation}\label{is_iss}
|x_i(t)|\hspace{-0.1cm}\leq\hspace{-0.1cm}\max\{\beta_i(|x_i(0)|,t),\hspace{-0.05cm}\max\limits_j\{\gamma_{ij}({\|x_{j[0,t]}\|}_{\infty})\},\hspace{-0.05cm}\gamma_i({\|u\|}_{\infty})\}
\end{equation}

Another notion useful for stability investigations of nonlinear systems
is the notion of an ISS-Lyapunov function. 
\begin{definition}[ISS-Lyapunov function]
Continuous function $V:\R^n\rightarrow\R_{+}$ is called an ISS Lyapunov function for $\Sigma$, if it is locally Lipschitz continuous on $\R^n\backslash\{0\}$ and
\begin{itemize}
\item $\exists\psi_1,\psi_2\in\K_{\infty}$ such that
\[\psi_1(\|x\|)\leq V(x)\leq\psi_2(\|x\|), \quad \forall x\in\R^n.\]
\item $\exists\gamma\in\K$, and positive definite function $\alpha$ such that in all points of differentiability of $V$
\begin{equation*}
V(x)\geq\gamma(\|u\|)\Rightarrow \nabla V(x)f(x,u)\leq-\alpha(\|x\|).
\end{equation*}
\end{itemize}
\end{definition}
\begin{remark}
In Theorem~2.3 in \cite{DRW10} it was proved that the system
\eqref{ws} is ISS if and only if it admits an (not
necessarily smooth) ISS Lyapunov function.
\end{remark}
A locally Lipschitz continuous function $V_i:\R^N_i\rightarrow\R_{+}$ is an ISS Lyapunov function for \eqref{is}, if:
\begin{itemize}
\item $\exists\psi_1,\psi_2{\in}\K_{\infty}:\quad\psi_{i1}(\|x_i\|){\leq} V_i(x_i){\leq}\psi_{2i}(\|x_i\|),  \forall x_i{\in}\R^{n_i}.$
\item$\exists\gamma_{ij}{\in}\K_{\infty}\cup\{0\}$, $j{\neq }i$, $\exists\gamma_i{\in}\K$: in all points of dif. of $V_i$
\begin{equation}\label{iss_lyap}
V_i(x_i)\geq\max\{\max_{j,j\neq i}\gamma_{ij}(V_j(x_j)),\gamma_i(\|u\|)\}\Rightarrow
\nabla V_i(x_i)f_i(x,u)\leq-\alpha_i(\|x_i\|).
\end{equation}
\end{itemize}

Note that an interconnection of subsystems of the form \eqref{is} can be unstable, i.e., not ISS, even if each of its subsystems is ISS. In the following subsection we recall known conditions that guarantee stability for interconnections of ISS systems.

\subsection{Known stability results for interconnected systems}\label{sec:SGC} 

To establish ISS of an interconnected system of the form \eqref{ws} we collect the gains $\gamma_{ij}\in \Kinf\cup \{0\}$ of the ISS conditions
\eqref{is_iss}  in a matrix
$\Gamma=(\gamma_{ij})_{n\times n}$, with the convention $\gamma_{ii}\equiv
0$,\, $i=1,\dots,n$. The operator $\Gamma:\R_{+}^n\rightarrow\R_{+}^n$ is then
defined by
 \begin{equation}\label{operator_gamma}
\Gamma(s):=\left(\begin{array}{c}\max\{\gamma_{1,2}(s_2),\dots,\gamma_{1,n}(s_n)\}\\
\vdots\\
\max\{\gamma_{n1}(s_1),\dots,\gamma_{n,n-1}(s_{n-1})\}
\end{array}\right)\,.
\end{equation}

In \cite{DRW07} the following theorem was proved that establishes ISS of interconnected system \eqref{ws}.
\begin{theorem}[Small gain theorem]\label{thm:iss}
Assume that each subsystem \eqref{is} is ISS. If condition
\begin{equation}\label{sgc_max}
\Gamma(s) \not\geq s, \forall s \in {\R}_{+}^n\backslash\{0\}
\end{equation}
holds, then \eqref{ws} is ISS.
\end{theorem}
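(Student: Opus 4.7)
The plan is to use the subsystem ISS estimates \eqref{is_iss} to derive a vector inequality for the running suprema of the individual states, and then close that inequality using a monotone-operator consequence of the small gain condition \eqref{sgc_max}.

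As a preliminary, I would establish the following monotone-operator fact about $\Gamma$: under \eqref{sgc_max}, there exists $\phi \in \K_\infty$ such that whenever $s, v \in \R_+^n$ satisfy the componentwise inequality $s \leq \max\{\Gamma(s), v\}$, one has $|s| \leq \phi(|v|)$. The idea is that since $\Gamma$ is continuous, monotone, and componentwise built from $\K_\infty \cup \{0\}$ maps, the set $\{s : s \leq \max\{\Gamma(s), v\}\}$ is compact, and its diameter depends monotonically on $|v|$; the small gain condition rules out unbounded fixed points of $s \mapsto \max\{\Gamma(s), v\}$. This is exactly the monotone aggregation lemma underlying the DRW07 approach, and it is the most delicate step.

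Next, I would fix an initial condition $x(0)$ and input $u \in L_\infty$, and for each $T > 0$ set $\bar{x}_i(T) := \|x_{i[0,T]}\|_\infty$. Taking the supremum over $t \in [0,T]$ in \eqref{is_iss}, and using that $\beta_i(|x_i(0)|,\cdot)$ is non-increasing in $t$, yields
\begin{equation*}
\bar{x}_i(T) \leq \max\Bigl\{\beta_i(|x_i(0)|,0),\ \max_{j \neq i} \gamma_{ij}(\bar{x}_j(T)),\ \gamma_i(\|u\|_\infty)\Bigr\}.
\end{equation*}
Defining $v_i := \max\{\beta_i(|x_i(0)|,0), \gamma_i(\|u\|_\infty)\}$, the above reads $\bar{x}(T) \leq \max\{\Gamma(\bar{x}(T)), v\}$ componentwise. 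The preliminary fact then gives $|\bar{x}(T)| \leq \phi(|v|)$ uniformly in $T$, which bounds the entire trajectory in terms of $|x(0)|$ and $\|u\|_\infty$ and thus yields a $\K$-style stability estimate.

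To upgrade this uniform boundedness into the $\mathcal{KL}$-$\mathcal{K}$ form \eqref{ws_iss}, I would apply the same argument on shifted intervals $[t_0, t_0+T]$ with initial value $x(t_0)$ in place of $x(0)$. Since each $\beta_i(r,t_0) \to 0$ as $t_0 \to \infty$, the initial-condition contribution to $v$ decays, so the running sup on $[t_0,\infty)$ contracts to a value controlled by $\gamma_i(\|u\|_\infty)$ alone; a standard semigroup/iteration argument then extracts a $\mathcal{KL}$ function $\beta$ from this decay together with the transient bound. The main obstacle, as noted, is the preliminary monotone-operator lemma: converting the pointwise condition $\Gamma(s) \not\geq s$ into the uniform $\K_\infty$-bound $\phi$ requires genuine topological work on $\R_+^n$ and is where essentially all the content of the theorem lies; once it is in hand, the rest of the proof is a bookkeeping exercise on the sup norms.
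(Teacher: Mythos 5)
The paper does not prove this theorem itself --- it is quoted from \cite{DRW07} --- so there is no in-paper proof to compare against; your argument follows the same route as that reference. Your identification of the monotone-operator lemma (under \eqref{sgc_max}, $s\leq\max\{\Gamma(s),v\}$ forces $|s|\leq\phi(|v|)$ for some $\phi\in\Kinf$) as the technical core is correct, and the reduction to the closed vector inequality $\bar x(T)\leq\max\{\Gamma(\bar x(T)),v\}$ by taking running suprema in \eqref{is_iss} is exactly the standard step.

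However, your final paragraph does not close as written. After shifting to $[t_0,t_0+T]$ with initial value $x(t_0)$, the initial-condition contribution to the new $v$ is $\beta_i(|x_i(t_0)|,0)$, which does \emph{not} decay in $t_0$: only the elapsed-time term $\beta_i(\cdot,\,t-t_0)$ decays, while $|x_i(t_0)|$ is merely bounded uniformly in $t_0$ by the transient step. Hence the running supremum over $[t_0,\infty)$ is not driven down by the mechanism you describe. The repair is to pass to $\ell_i:=\limsup_{t\to\infty}|x_i(t)|$ in the time-shifted subsystem estimates, using that $\sup_{t\ge t_0}|x_j(t)|\downarrow\ell_j$ and $\beta_i(\cdot,t-t_0)\to0$ as $t-t_0\to\infty$; this yields $\ell\leq\max\{\Gamma(\ell),\gamma_u\}$ for $\gamma_u$ the vector of input gains, whence $|\ell|\leq\phi(|\gamma_u|)$ by the same operator lemma. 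Coupling this asymptotic-gain bound with the uniform global bound from the transient step then produces ISS via the Sontag--Wang characterization (global stability plus asymptotic gain implies ISS). That last step is a substantive theorem, not bookkeeping, but it is standard and citable; with the $\limsup$ repair your architecture matches the cited proof.
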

The condition $\Gamma(s) \not\geq s$ is called {\it small gain condition}.

A Lyapunov type counterpart of the small gain theorem was proved in \cite{DRW10}. To recall this result we need first the following notion of  
an $\Omega$-path, see \cite{DRW10}.
\begin{definition}[$\Omega$-path]
A continuous path $\sigma\in\K_{\infty}^n$ is called an $\Omega$-path
with respect to $\Gamma$ if
\begin{enumerate}
\item for each $i$, the function $\sigma^{-1}_i$ is locally Lipschitz
  continuous on $(0,\infty)$;
\item for every compact set $P\subset(0,\infty)$ there are finite
constants $0<c<C$ such that for all points of differentiability of
$\sigma^{-1}_i$ and $i=1,\ldots,n$ we have
\begin{equation}\label{sigma cond 1}
0<c\leq(\sigma^{-1}_i)'(r)\leq C, \quad \forall r\in P
\end{equation}
\item for all $r>0$ it holds that 
\begin{equation}\label{sigma cond 2'}
\Gamma(\sigma(r))\leq\sigma(r).
\end{equation}
\end{enumerate}
\end{definition}
\begin{remark}\label{omega_max}
In \cite[Proposition~2.7]{KaJ09} and \cite[Proposition~2.3.14]{Kos11} it was shown that if the small gain condition $\Gamma(s) \not\geq s$, $\forall s \in
{\R}_{+}^n\backslash\{0\}$ holds, then $\Omega$-path can be constructed as $\sigma(t):=Q(at)$ for some vector $a>0$ and $Q(s):=(Q_1(s),\ldots,Q_n(s))^T$, where $Q_i(s):=\max \{s_i,(\Gamma(s))_i,\ldots,(\Gamma^{n-1}(s))_i\}$.
\end{remark}

\begin{theorem}[Small gain theorem (in terms of Lyapunov functions)]\label{thm:iss_lyap}
  Assume that each subsystem of \eqref{is} has an ISS Lyapunov
  function $V_i$ and the corresponding gain operator $\Gamma$ is given by
  \eqref{operator_gamma}. If $\Gamma(s) \not\geq s$   for all $s\neq 0, s\geq 0$ is satisfied, then the system \eqref{ws}
   is ISS and an ISS Lyapunov function is given by
\begin{equation}\label{Lyapunov_function}
V(x)=\max\limits_{i=1,\ldots,n}\sigma_i^{-1}(V_i(x_i)),
\end{equation}
where $\sigma\in\K_{\infty}^n$ is an arbitrary $\Omega$-path with respect to $\Gamma$.
\end{theorem}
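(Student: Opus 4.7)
The plan is to show that $V(x):=\max_{i=1,\ldots,n}\sigma_i^{-1}(V_i(x_i))$ is itself an ISS Lyapunov function for the composite system \eqref{ws}; the ISS conclusion then follows from the remark (Theorem~2.3 of \cite{DRW10}) immediately following the ISS Lyapunov definition. So I would verify the two bullets of that definition in turn. The $\K_\infty$ sandwich for $V$ is immediate: composing $\psi_{i1}(\|x_i\|)\leq V_i(x_i)\leq\psi_{i2}(\|x_i\|)$ with $\sigma_i^{-1}\in\K_\infty$ and then taking the maximum over $i$ yields $\psi_1(\|x\|)\leq V(x)\leq\psi_2(\|x\|)$ for suitable $\psi_1,\psi_2\in\K_\infty$, after using equivalence of $\|x\|$ with the component-wise maximum $\max_i\|x_i\|$. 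Local Lipschitz continuity of $V$ away from the origin holds because each $V_i$ is locally Lipschitz, each $\sigma_i^{-1}$ is locally Lipschitz on $(0,\infty)$ by property~1 of the $\Omega$-path, and the pointwise maximum of finitely many locally Lipschitz functions is locally Lipschitz.

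The core step is the Lyapunov decrease implication. I would set $\gamma(r):=\max_i\sigma_i^{-1}(\gamma_i(r))$, which lies in $\K$, and consider a point $x\neq 0$ at which $V$ is differentiable and $V(x)\geq\gamma(\|u\|)$. Let $i$ attain the max, so that $V_i(x_i)=\sigma_i(V(x))$ and $V_j(x_j)\leq\sigma_j(V(x))$ for every $j\neq i$. Reading off the $i$-th coordinate of the $\Omega$-path relation $\Gamma(\sigma(V(x)))\leq\sigma(V(x))$ yields $\max_{j\neq i}\gamma_{ij}(\sigma_j(V(x)))\leq\sigma_i(V(x))=V_i(x_i)$, which, combined with monotonicity of each $\gamma_{ij}$, gives $\max_{j\neq i}\gamma_{ij}(V_j(x_j))\leq V_i(x_i)$. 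Similarly, $V_i(x_i)=\sigma_i(V(x))\geq\sigma_i(\sigma_i^{-1}(\gamma_i(\|u\|)))=\gamma_i(\|u\|)$. Both premises in \eqref{iss_lyap} for subsystem~$i$ thus hold, so $\nabla V_i(x_i)f_i(x,u)\leq-\alpha_i(\|x_i\|)$.

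To transfer this into a decrease of $V$ itself, the chain rule gives $\nabla V(x)f(x,u)=(\sigma_i^{-1})'(V_i(x_i))\,\nabla V_i(x_i)f_i(x,u)\leq-(\sigma_i^{-1})'(V_i(x_i))\,\alpha_i(\|x_i\|)$ at a point of differentiability with $i$ the unique active index. Property~2 of the $\Omega$-path supplies a strictly positive lower bound for $(\sigma_i^{-1})'$ on any compact subset of $(0,\infty)$, so this product is strictly positive whenever $x\neq 0$. The main obstacle I anticipate is the bookkeeping that follows: assembling, from the finitely many index-dependent estimates $(\sigma_i^{-1})'(V_i(x_i))\alpha_i(\|x_i\|)$, a single positive definite function $\alpha$ of $\|x\|$ such that $\nabla V(x)f(x,u)\leq-\alpha(\|x\|)$ holds uniformly in $x$ and in the active index $i$. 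This is accomplished by minorizing each product via the $\K_\infty$ bounds on $V_i$ and $\sigma_i$ so that it becomes a positive definite function of $V(x)$, hence of $\|x\|$, and then taking the minimum over $i$. Points where several indices simultaneously attain the max pose no issue, since $V$ is non-differentiable there while remaining locally Lipschitz, and the ISS Lyapunov inequality is only required at the a.e.\ differentiability points guaranteed by Rademacher's theorem.
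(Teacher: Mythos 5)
The paper does not prove Theorem~\ref{thm:iss_lyap} itself; it is quoted verbatim from \cite{DRW10}, and the authors refer the reader there. So there is no proof in this paper to compare against, and I can only assess your argument against the standard proof from \cite{DRW10}, which your proposal reproduces quite faithfully: the sandwich bound and Lipschitz continuity of $V=\max_i\sigma_i^{-1}\circ V_i$, identification of the active index $i$, reading off the $i$-th row of $\Gamma(\sigma(V(x)))\leq\sigma(V(x))$ to trigger the subsystem implication in \eqref{iss_lyap}, and then passing to a decrease of $V$ via the chain rule with the lower bound on $(\sigma_i^{-1})'$ from property~2 of the $\Omega$-path. This is exactly the route taken in \cite{DRW10}.

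Two technical points you touch on deserve more care, and both are handled explicitly in \cite{DRW10}. First, at a point of differentiability of $V$ it is not automatic that the active index is unique, and even if it is, the chain rule $\nabla V(x)f(x,u)=(\sigma_i^{-1})'(V_i(x_i))\nabla V_i(x_i)f_i(x,u)$ requires differentiability of $\sigma_i^{-1}$ at $V_i(x_i)$, which property~1 of the $\Omega$-path only gives almost everywhere; the original proof works with Clarke subdifferentials (or a careful a.e.\ argument) precisely to avoid this. Second, the lower bound $0<c\leq(\sigma_i^{-1})'$ holds only on compact subsets of $(0,\infty)$ and the constant $c$ depends on the compact set, so extracting a single positive definite $\alpha$ of $\|x\|$ requires constructing a continuous, state-dependent minorant (e.g.\ by infima over nested annuli) rather than invoking a uniform $c$. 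Your sketch correctly flags this as the main bookkeeping obstacle, but the way it is phrased ("supplies a strictly positive lower bound \dots on any compact subset \dots so this product is strictly positive") elides the step from pointwise positivity to a genuine positive definite comparison function. Neither issue invalidates the approach; they are the parts where the cited proof spends the most effort, and your outline would need to fill them in to be complete.
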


Consider an interconnected system of the form \eqref{ws}. Assume, for convenience that all its subsystems are ISS with gains $\gamma_{ij}$ collected in the gain matrix $\Gamma$. Then, to establish ISS of the interconnection we can use Theorem~\ref{thm:iss}, i.e. we need to verify the small gain condition \eqref{sgc_max}. By \cite[Lemma~2.3.14]{Rueff07} the small gain condition \eqref{sgc_max} is equivalent to the cycle condition
\begin{equation}\label{cycle}
\gamma_{k_1k_2} \circ \gamma_{k_2k_3} \circ \dots \circ \gamma_{k_{r-1}k_r} < \id,
\end{equation}
for all $(k_1,...,k_r) \in \{1,...,n\}^r$ with $k_1=k_r$. The largest possible number of cycles to be checked in this condition can be calculated 
as $\sum_{k=2}^n\left(\hspace{-0.2cm}
\begin{array}{l}
n \\
k
\end{array}\hspace{-0.2cm}\right)k!$, where $\left(\hspace{-0.2cm}\begin{array}{l}
n \\
k
\end{array}\hspace{-0.2cm}\right)$ is the binomial coefficient. 

Thus, too many scalar inequalities need to be verified on $\R_+$ in case of large $n$.

\section{Reduction rules}\label{sec:Reduction}

To reduce the size of the gain matrix in the small gain condition \eqref{sgc_max} we model the structure of the network described in \eqref{ws} as a directed graph with weights $G=(\V,\E,\Gamma)$. The vertex set $\V=\{1,\dots,n\}$ corresponds to the subsystems of the network, the edge set $\E$ to the interconnection between subsystems, i.e.
\begin{equation}
e_{ij}=\left\{
\begin{array}{ll}
1,&\mbox{ if }\gamma_{ij}\not\equiv 0,\\
0,&\mbox{otherwise}.
\end{array}
\right.
\end{equation}
The weight of the edge $e_{ij}$ from vertex $i$ to $j$  is given by $\gamma_{ji}$ and describes the influence of subsystem $i$ on subsystem $j$. All the weights are collected in the gain matrix $\Gamma$. Note that the matrix $\Gamma$ is not static, i.e. the weights are in general nonlinear functions. 

In our reduction approach we propose to reduce the size of the gain matrix $\Gamma$ in the small gain condition \eqref{sgc_max}. In particular, we transform the graph $G=(\V,\E,\Gamma)$ 
by introducing aggregation rules for vertices for typical subgraphs occurring in the network. Such subgraphs we will call \textit{motifs}\index{motifs}  \cite{motifs}. By aggregation of the vertices we understand the construction of a smaller graph $\widetilde G=(\widetilde \V,\widetilde \E,\widetilde \Gamma)$ in which the vertices may represent nonempty subsets of vertices in the original graph $G=(\V,\E,\Gamma)$. 
We single out the following motifs: parallel connections, sequential connections of vertices and almost disconnected subgraphs. These reduction rules are inspired by the properties of motifs in \cite{AlT05}.

\subsection{Aggregation of sequentially connected nodes}
The vertices of the set ${\V}_J= \{v_1,...,v_k \}$ are called {\it
  sequentially connected}, see Figure~\ref{fig:ser_large_general},  if there exist vertices $v,v' \in \V \setminus {\V}_J$
such that
\begin{align*}
P(v_i)=
\begin{cases}
v &\quad i=1,\\
v_{i-1} &\quad i=2,...,k
\end{cases}
\end{align*}
and 
\begin{align*}
S(v_i)=
\begin{cases}
v_{i+1} &\quad i=1,...,k-1,\\
v'&\quad i=l.
\end{cases}
\end{align*}
The predecessor set $P$ and successor set $S$ were defined in \eqref{graph_p} and  \eqref{graph_s}.

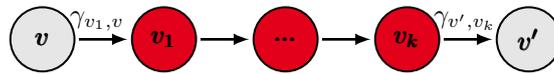
\begin{figure}[tbh]
\centering
\begin{tikzpicture}[scale=0.8] 
\begin{scope}[>=latex]
\filldraw[fill=grau,line width=0.75pt] (0,0) circle (15pt) node {$\pmb{v}$};
\filldraw[fill=rot,line width=0.75pt] (2,0) circle (15pt) node {$\pmb{v_1}$};
\filldraw[fill=rot,line width=0.75pt] (4,0) circle (15pt) node {$\pmb{...}$};
\filldraw[fill=rot,line width=0.75pt] (6,0) circle (15pt) node {$\pmb{v_k}$};
\filldraw[fill=grau,line width=0.75pt] (8,0) circle (15pt) node {$\pmb{v'}$};
\draw[line width=1pt] (0.9,0.3) node {\textbf{$\gamma_{v_1,v}$}};
\draw[line width=1pt] (6.95,0.3) node {\textbf{$\gamma_{v',v_k}$}};
	
\draw[->,line width=1pt] (0.6,0) -- (1.4,0); 			
\draw[->,line width=1pt] (2.6,0) -- (3.4,0); 			
\draw[->,line width=1pt] (4.6,0) -- (5.4,0); 			
\draw[->,line width=1pt] (6.6,0) -- (7.4,0); 			
\end{scope}
\end{tikzpicture}
\caption{Sequential connection of vertices $v_1,\ldots,v_k$.}
\label{fig:ser_large_general} 
\end{figure}

The corresponding gain matrix is given by
\begin{equation}\label{gamma_ser}
\Gamma=\left(
\begin{array}{ccccccccc}
\ldots&\ldots&\ldots&\ldots&\ldots&\ldots&\ldots&\ldots&\ldots\\
\ldots&\ldots&0&0&\ldots&0&0&\ldots&\ldots\\
\ldots&0&\gamma_{v_1,v}&0&\ldots&0&0&0&\ldots\\
\ldots&0&0&\gamma_{v_2,v_1}&\dots&0&0&0&\ldots\\
\ldots&\vdots&\vdots&\vdots&\ddots&\vdots&\vdots&\vdots&\ldots\\
\ldots&0&0&0&\ldots&\gamma_{v_k,v_{l-1}}&0&0&\ldots\\
\ldots&\ldots&\ldots&0&\ldots&0&\gamma_{v',v_k}&\ldots&\ldots\\
\ldots&\ldots&0&0&\ldots&0&0&\ldots&\ldots\\
\ldots&\ldots&\ldots&\ldots&\ldots&\ldots&\ldots&\ldots&\ldots\\
\end{array}
\right).
\end{equation}
The cycle condition  \eqref{cycle} for the cycles that include nodes from $\{v_1,\dots,v_k\}$  looks as follows:
\begin{equation}\label{cycle_large_ser}
\ldots\circ\gamma_{v',v_{k}}\circ\ldots\circ\gamma_{v_2,v_1}\circ\gamma_{v_1,v}\ldots<\id.
\end{equation}
\subsubsection{Aggregation of gains}\quad\\
To obtain a graph of a smaller size we aggregate the nodes $v_1,\dots,v_k$ with the node $v$. We denote the new vertex by $J$. A cut-out of the new reduced graph is shown in Figure~\ref{fig:ser_small_general}. So, we consider the reduced graph $\widetilde{G}=(\widetilde{\V},\widetilde{\E},\widetilde{\Gamma})$, where the vertices are given by
\begin{align}\label{seq:tildeV}
 \tilde \V=(\V\setminus (\V_J\cup \{v\}) ) \cup J
\end{align}
and the edges are given by
\begin{multline}\label{seq:tildeE}
\tilde \E= \E\,  \setminus \, (\{(v,w),(w,v'),(w_1,w_2) : w,w_1,w_2 \in \V_J\}\cup(v,v'))\, \\ \cup\,\{(J,v')\cup(u,J): \, (u,v)\in \E\}.
\end{multline}

The corresponding weighted adjacency matrix $\tilde \Gamma$ of the dimension $n-k$ can be obtained from $\Gamma$,
where the rows and columns corresponding to the vertices $v,v_1,\ldots,v_k$ are
replaced by a row and a column corresponding to the new vertex $J$. The weights are
then given by  
\begin{eqnarray}\label{gain_ser}
\widetilde{\gamma}_{v',J}:=\max\{\gamma_{v',v_k}\circ\dots\circ\gamma_{v_2,v_1}\circ\gamma_{v_1,v},\gamma_{v',v}\},
\end{eqnarray}
\begin{eqnarray}\label{gain_ser2}
\widetilde{\gamma}_{J,v'}:=\gamma_{v,v'},\quad \widetilde{\gamma}_{J,j}:=\gamma_{v,j},\quad \widetilde{\gamma}_{j,J}:=\gamma_{j,J},\quad j\in \V\setminus(\V_J\cup\{v,v'\}).
\end{eqnarray}

\begin{figure}[tbh]
\centering
\begin{tikzpicture}[scale=0.8] 
\begin{scope}[>=latex]
\filldraw[fill=grau,line width=0.75pt] (0,0) circle (15pt) node {$\pmb{J}$};
\filldraw[fill=grau,line width=0.75pt] (4,0) circle (15pt) node {$\pmb{v'}$};
\draw[line width=1pt] (2,0.5) node {\textbf{$\widetilde\gamma_{v',J}$}};
\draw[->,line width=1pt] (0.6,0) -- (3.4,0); 			
\end{scope}
\end{tikzpicture}
\caption{Vertices $v_1,\ldots,v_k,v'$ are aggregated.}
\label{fig:ser_small_general}  
\end{figure}
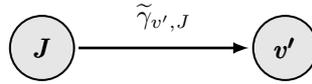

Other gains stay the same, i.e. 
\begin{eqnarray}\label{gain_ser3}
\widetilde{\gamma}_{ij}:=\gamma_{ij}, \,i,j\neq J.
\end{eqnarray}
The small gain condition \eqref{sgc_max} corresponding to the reduced gain matrix $\tilde \Gamma$ has the following properties.
\begin{theorem}\label{theorem_seq}
Consider a gain matrix $\Gamma$ as in \eqref{gamma_ser} where the nodes $\{v_1,\ldots,v_k\}$ of the corresponding graph are sequentially connected.
Then condition \eqref{sgc_max} holds for the matrix $\Gamma$ if and only if condition \eqref{sgc_max} holds for aggregated the matrix $\tilde \Gamma$ with gains defined in \eqref{gain_ser}-\eqref{gain_ser3}. 

Assume that there were $p$ cycles that include one of the nodes  $v_i$ from $V_J$. If $\gamma_{v',v}\neq 0$, then the number of cycles to be checked in the cycle condition \eqref{cycle} corresponding to the reduced matrix $\tilde \Gamma$ is decreased by $p$  after the aggregation, otherwise it stays the same.
\end{theorem}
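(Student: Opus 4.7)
My plan is to reduce both the original and the aggregated small gain condition to the equivalent cycle condition \eqref{cycle} via \cite[Lemma~2.3.14]{Rueff07}, and then set up a correspondence between the directed cycles in $G$ and those in the aggregated graph $\widetilde G$ that preserves the cyclic composition of gains up to $\max$. Once this correspondence is established, the equivalence of the two small gain conditions is immediate, and the counting statement follows by bookkeeping.

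First I would classify the directed cycles of $G$ into three disjoint families. Because every $v_i\in V_J$ has a unique predecessor ($v$ if $i=1$, else $v_{i-1}$) and a unique successor ($v'$ if $i=k$, else $v_{i+1}$), any cycle that visits some $v_i$ is forced to traverse the entire chain $v\to v_1\to\cdots\to v_k\to v'$; call these cycles of type (b). Cycles visiting $v$ but no $v_i$ must leave $v$ via the direct edge $(v,v')$, which exists only when $\gamma_{v',v}\neq 0$; call these of type (c). All remaining cycles avoid $\{v,v_1,\ldots,v_k\}$ entirely; call these of type (a). By hypothesis there are exactly $p$ cycles of type (b).

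Next I would describe the image of each family in $\widetilde G$. By \eqref{gain_ser3} type (a) cycles remain untouched. Type (b) and type (c) cycles share the same return path from $v'$ back to $v$ through $\V\setminus(V_J\cup\{v,v'\})$, so both collapse in $\widetilde G$ to the same combinatorial cycle that uses the single edge $(J,v')$. The edge weight
\[
\widetilde\gamma_{v',J}=\max\{\gamma_{v',v_k}\circ\cdots\circ\gamma_{v_1,v},\,\gamma_{v',v}\}
\]
encodes the two originals simultaneously. Since class $\cal K$ maps are monotone, one has the identity $\alpha\circ\max\{f,g\}\circ\beta=\max\{\alpha\circ f\circ\beta,\alpha\circ g\circ\beta\}$, which applied along the aggregated cycle shows that its cycle composition equals the maximum of the cycle compositions of the corresponding type (b) and type (c) cycles in $G$. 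Therefore the reduced cycle satisfies $<\id$ if and only if both the (b) and, when it exists, the (c) cycle do. Combined with the unchanged type (a) cycles, this proves that the cycle condition for $\widetilde\Gamma$ is equivalent to the cycle condition for $\Gamma$, hence \eqref{sgc_max} for $\widetilde\Gamma$ is equivalent to \eqref{sgc_max} for $\Gamma$.

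For the count, I would observe that type (b) cycles are in bijection with the return paths from $v'$ to $v$ through $\V\setminus(V_J\cup\{v,v'\})$, and when $\gamma_{v',v}\neq0$ the same set of return paths enumerates the type (c) cycles. Thus $G$ has $\#(a)+2p$ cycles while $\widetilde G$ has $\#(a)+p$ cycles, a decrease of exactly $p$; if $\gamma_{v',v}=0$ no type (c) cycle exists and the two counts both equal $\#(a)+p$. The main obstacle I foresee is the combinatorial justification that no cycle can visit a single $v_i$ without traversing the full chain and that type (b) and type (c) cycles sharing a return path collapse to precisely the same aggregated cycle; once that is pinned down, the algebraic step using monotonicity of $\cal K$ maps is straightforward.
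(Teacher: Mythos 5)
Your proposal follows the same route as the paper: pass to the equivalent cycle condition, observe that any cycle through one $v_i$ must traverse the whole chain $v\to v_1\to\cdots\to v_k\to v'$, use monotonicity of $\mathcal K$-functions to distribute a cyclic composition over the $\max$ in \eqref{gain_ser}, and then count. Your write-up is in fact more careful than the paper's, which states the distributivity step only implicitly. One small imprecision in your classification: the setup (see \eqref{gain_ser2}, where $J$ inherits all of $v$'s incident edges) does not force $v$'s successors to be exactly $\{v_1\}$ or $\{v_1,v'\}$; a cycle through $v$ avoiding every $v_i$ could also leave $v$ along some edge $(v,w)$ with $w\notin\V_J\cup\{v'\}$. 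Such cycles are neither type (a), (b) nor (c) in your taxonomy. They are, however, preserved verbatim under the aggregation (with $v$ renamed to $J$), so they contribute equally to both counts and affect neither the equivalence nor the asserted decrease by $p$ (resp.\ $0$); your conclusions stand, but the sentence ``must leave $v$ via the direct edge $(v,v')$'' should be weakened accordingly.
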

\begin{proof}
Let condition \eqref{sgc_max} for the gains defined in \eqref{gain_ser}-\eqref{gain_ser3}  hold. Then the cycle condition \eqref{cycle} corresponding to these gains holds. In particular, for the cycles containing the gain $\widetilde{\gamma}_{v',J}$ the following inequality holds:
\begin{equation}\label{cycle_small_ser}
\ldots\circ\widetilde{\gamma}_{v',J}\circ\ldots<\id.
\end{equation}
From the definition of the gain $\widetilde{\gamma}_{v',J}$ in \eqref{gain_ser} condition \eqref{cycle_large_ser} holds. Condition \eqref{cycle} on other cycles corresponding to $\Gamma$ is satisfied straightforwardly. Thus, the matrix $\Gamma$ satisfies \eqref{sgc_max}.

For the proof in the opposite direction we consider the cycle condition \eqref{cycle_large_ser} and from  \eqref{gain_ser} obtain \eqref{cycle_small_ser}, i.e. cycle condition for the aggregated gain $\widetilde{\gamma}_{v',J}$. The rest cycle conditions are satisfied straightforwardly. Thus, the matrix $\tilde\Gamma$ satisfies \eqref{sgc_max}.

If $\gamma_{v',J}= 0$, then, as the cycle containing one of the nodes $\{v_1,\dots,v_k\}$ contains necessarily all other nodes from $\{v_1,\dots,v_k\}$, the number of cycles to be checked in the cycle condition is the same. Otherwise, these cycles will "coincide" with the cycles that include gain $\gamma_{v',v}$. Thus, the overall number of the cycles will decrease by $p$. 
\end{proof}
Thus, to show that a system of the form \eqref{ws} is ISS, it is enough to verify the small gain condition  $\widetilde{\Gamma}(s)\not\geq s$ corresponding to the reduced gain matrix  $\widetilde{\Gamma}$.
\begin{corollary}\label{cor:seq}
Consider interconnected system \eqref{ws} and assume that  all the subsystems in \eqref{is} are ISS with gains as in \eqref{is_iss}. If condition \eqref{sgc_max} holds for the gains defined in \eqref{gain_ser}-\eqref{gain_ser3}, then the system \eqref{ws} is ISS.
\end{corollary}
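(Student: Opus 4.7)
The plan is to derive the corollary as an immediate consequence of Theorem~\ref{theorem_seq} combined with the classical small gain Theorem~\ref{thm:iss}; essentially nothing new is required, the corollary merely packages the reduction result in a form directly usable for stability analysis.

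First I would observe that the hypotheses of the corollary match exactly those of Theorem~\ref{theorem_seq}: namely, the subsystems in \eqref{is} are ISS with gains $\gamma_{ij}$ assembled into a matrix $\Gamma$ of the form \eqref{gamma_ser}, and the aggregated matrix $\widetilde{\Gamma}$ is built via the rules \eqref{gain_ser}--\eqref{gain_ser3}. Assuming the small gain condition \eqref{sgc_max} holds for $\widetilde{\Gamma}$, i.e.\ $\widetilde{\Gamma}(s)\not\geq s$ for all $s\in\R_+^{n-k}\setminus\{0\}$, I would invoke the equivalence established in Theorem~\ref{theorem_seq} to conclude that the small gain condition \eqref{sgc_max} holds for the original matrix $\Gamma$, i.e.\ $\Gamma(s)\not\geq s$ for all $s\in\R_+^{n}\setminus\{0\}$.

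Next I would apply Theorem~\ref{thm:iss} directly: since each subsystem of \eqref{is} is ISS and the gain operator $\Gamma$ defined by \eqref{operator_gamma} satisfies the small gain condition, the interconnected system \eqref{ws} is ISS. This closes the argument.

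There is no substantive obstacle in the proof; the only thing worth being careful about is the bookkeeping, namely that the equivalence part of Theorem~\ref{theorem_seq} (the backward implication, from the reduced condition to the original one) is indeed the direction being invoked, and that the gains $\widetilde{\gamma}_{ij}$ assembled according to \eqref{gain_ser}--\eqref{gain_ser3} really do produce an operator $\widetilde{\Gamma}:\R_+^{n-k}\to\R_+^{n-k}$ of the form \eqref{operator_gamma} to which the equivalence applies. Given that, the corollary is a one-line composition of two already-established results.
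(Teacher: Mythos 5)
Your proof is correct and matches the paper's approach exactly: the paper's own proof is the one-liner ``The assertion follows from Theorem~\ref{theorem_seq} and Theorem~\ref{thm:iss}.'' Your careful note that it is the backward direction of the equivalence in Theorem~\ref{theorem_seq} that is needed is precisely the right bookkeeping.
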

\begin{proof}
The assertion follows from Theorem~\ref{theorem_seq} and Theorem~\ref{thm:iss}.
\end{proof}
\subsubsection{Construction of an $\Omega$-path}\quad\\
To construct an ISS-Lyapunov function of the interconnected system \eqref{ws}, we can apply Theorem~\ref{thm:iss_lyap}. However, for this purpose we need to have an $\Omega$-path $\sigma$ satisfying  \eqref{sigma cond 2'}, i.e.
\[\Gamma(\sigma)\leq\sigma.\]
It appears, that if an $\Omega$-path corresponding to the reduced gain matrix $\widetilde\Gamma$ is known, we can calculate an $\Omega$-path for the large gain matrix $\Gamma$. Furthermore, we can additionally show that knowing  an $\Omega$-path for large gain matrix $\Gamma$ we can construct  an $\Omega$-path for the small gain matrix $\Gamma$
\begin{proposition}\label{proposition_red_seq}
Consider a gain matrix $\Gamma$ and the corresponding reduced gain matrix $\tilde \Gamma$ with gains defined in \eqref{gain_ser}-\eqref{gain_ser3}. Then:\\
(i)\quad If an $\Omega$-path $\widetilde{\sigma}$ for $\tilde \Gamma$  satisfying \eqref{sigma cond 2'} is given, then an $\Omega$-path $\bar{\sigma}$ for the matrix $\Gamma$ can be constructed as 
\begin{equation}\label{omega_ser}
\bar{\sigma}_{w}:=
\left\{
\begin{array}{ll}
\gamma_{v_i,v_{i-1}}\circ\gamma_{v_{i-1},v_{i-2}}\circ\dots\circ\gamma_{v_2,v_{1}}\circ\gamma_{v_1,v}\circ\widetilde{\sigma}_J,&\mbox{ if } w=v_i, i\in\{1,\dots,k\},\\
\widetilde{\sigma}_w,&\mbox{ otherwise };
\end{array}
\right.
\end{equation}
(ii)\quad  If an $\Omega$-path $\bar{\sigma}$ for $ \Gamma$  satisfying \eqref{sigma cond 2'} is given, then an $\Omega$-path $\tilde{\sigma}$ for the matrix $\tilde\Gamma$ can be constructed as 
\begin{equation}\label{omega_ser_2}
\bar{\sigma}_{w}:=
\left\{
\begin{array}{ll}
\bar{\sigma}_w &\mbox{ if } w\in\tilde \V\setminus J,\\
\bar{\sigma}_v,&\mbox{ if } w=J;
\end{array}
\right.
\end{equation}
\end{proposition}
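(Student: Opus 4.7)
The plan is to verify, in each direction, the three defining properties of an $\Omega$-path: membership in $\K_\infty^n$, locally Lipschitz inverses with the two-sided derivative bound \eqref{sigma cond 1} on compact subsets of $(0,\infty)$, and the fundamental inequality \eqref{sigma cond 2'}. The first two properties follow in both directions from closure of $\K_\infty$ under composition (all the gains $\gamma_{v_i,v_{i-1}}$ and $\gamma_{v_1,v}$ are nonzero by the sequential connectivity assumption, hence lie in $\K_\infty$), together with the chain rule and the identity $(g\circ h)^{-1}=h^{-1}\circ g^{-1}$, which transfer both the local Lipschitz property of the inverses and the compact derivative bound. The substance of the proof therefore lies entirely in verifying \eqref{sigma cond 2'} componentwise.

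For part (i), I would adopt the natural convention $\bar\sigma_v:=\widetilde\sigma_J$ (the empty composition, consistent with $v$ being absorbed into the aggregated vertex $J$) and check $\Gamma(\bar\sigma)\le\bar\sigma$ vertex by vertex. The definition \eqref{omega_ser} is engineered so that for each $v_i$ the single-predecessor identity $P(v_i)=\{v_{i-1}\}$ (with $v_0:=v$) yields $\Gamma(\bar\sigma)_{v_i}=\gamma_{v_i,v_{i-1}}(\bar\sigma_{v_{i-1}})=\bar\sigma_{v_i}$, so the inequality is in fact an equality along the chain. At the vertex $v'$, the two predecessors $v_k$ and (if $\gamma_{v',v}\not\equiv 0$) $v$ contribute precisely the two branches of the max that defines $\widetilde\gamma_{v',J}$ in \eqref{gain_ser}, giving $\max\{\gamma_{v',v_k}(\bar\sigma_{v_k}),\gamma_{v',v}(\bar\sigma_v)\}=\widetilde\gamma_{v',J}(\widetilde\sigma_J)\le\widetilde\sigma_{v'}=\bar\sigma_{v'}$, where the last inequality is the $\Omega$-path property of $\widetilde\sigma$ at $v'$. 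For the remaining vertices the predecessor sets in $\Gamma$ and $\widetilde\Gamma$ coincide and the gains are unchanged, so the inequality for $\bar\sigma$ reduces to that for $\widetilde\sigma$ verbatim.

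For part (ii), I would reverse the above computation. After setting $\widetilde\sigma_J:=\bar\sigma_v$ and $\widetilde\sigma_w:=\bar\sigma_w$ for $w\ne J$, the only new check is at $v'$, where $\widetilde\gamma_{v',J}(\widetilde\sigma_J)$ is the maximum of the two compositions appearing in \eqref{gain_ser} applied to $\bar\sigma_v$. To bound the long composition I would iterate the $\Omega$-path inequality for $\bar\sigma$ along the chain: from $\gamma_{v_1,v}(\bar\sigma_v)\le\bar\sigma_{v_1}$ and monotonicity of $\gamma_{v_2,v_1}$, one obtains $\gamma_{v_2,v_1}\circ\gamma_{v_1,v}(\bar\sigma_v)\le\bar\sigma_{v_2}$, and inductively up to $\bar\sigma_{v_k}$, whence $\gamma_{v',v_k}\circ\cdots\circ\gamma_{v_1,v}(\bar\sigma_v)\le\gamma_{v',v_k}(\bar\sigma_{v_k})\le\bar\sigma_{v'}$. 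Combined with the direct bound $\gamma_{v',v}(\bar\sigma_v)\le\bar\sigma_{v'}$, this yields $\widetilde\gamma_{v',J}(\widetilde\sigma_J)\le\widetilde\sigma_{v'}$. The inequality at $J$ itself reduces to $\Gamma(\bar\sigma)_v\le\bar\sigma_v$, since the predecessors of $J$ in the reduced graph are precisely $P(v)$ in the original one, and the outgoing gains from those predecessors are unchanged.

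I expect the main obstacle to be bookkeeping rather than any deep step: specifically, making unambiguous the value of $\bar\sigma$ at the vertex $v$ (the formula \eqref{omega_ser} does not treat $w=v$ explicitly, and the argument in (i) requires the convention $\bar\sigma_v=\widetilde\sigma_J$, which is also what the formula \eqref{omega_ser_2} encodes in reverse), and being careful that the two-sided derivative bounds on compact subsets of $(0,\infty)$ propagate through the chain-rule computation for the composed $\K_\infty$ maps. The latter is routine because a compact subset of $(0,\infty)$ is mapped to another compact subset of $(0,\infty)$ by any $\K_\infty$ function, so the per-link constants $c,C$ combine via the chain rule into uniform constants for the composition.
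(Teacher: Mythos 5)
Your proof is correct and follows the same route as the paper's: componentwise verification of $\Gamma(\sigma)\le\sigma$, with equality along the chain in (i) and a telescoping collapse of the long composition against the original $\Omega$-path inequality in (ii). You are in fact more careful than the published argument in two respects: you explicitly record the needed convention $\bar{\sigma}_v=\widetilde{\sigma}_J$, which the paper's formula \eqref{omega_ser} leaves unstated but which is required for the component at $v$ to close, and you address conditions (1)--(2) of the $\Omega$-path definition (regularity of the inverses and the compact derivative bounds), which the paper's proof checks only implicitly by verifying condition \eqref{sigma cond 2'} alone.
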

\begin{proof}\quad
\\
{\it Proof of (i)}:\\
We assume that an $\Omega$-path $\widetilde{\sigma}$ for the small gain matrix $\tilde \Gamma$ is known. In particular, by \eqref{sigma cond 2'}
 $\widetilde{\Gamma}(\widetilde{\sigma})\leq\widetilde{\sigma}$ holds.  Let us check whether $\Omega$-path $\bar{\sigma}$ defined in \eqref{omega_ser} is an  $\Omega$-path for the large gain matrix $\Gamma$. To this end we need to check \eqref{sigma cond 2'} for $\bar{\sigma}$.
 
 For the components $\Gamma(\bar{\sigma})_w$, $w\not\in\{v_1,\dots,v_k,v'\}$ the inequality  \eqref{sigma cond 2'} holds straightforwardly.  Consider now  $\Gamma(\bar{\sigma})_w$, $w=v_i, i\in\{1,\dots,k\}$. Applying \eqref{gain_ser}-\eqref{gain_ser3} and \eqref{omega_ser} we obtain:
\begin{eqnarray*}
\Gamma(\bar{\sigma})_{v_i}&=&\gamma_{v_i,v_{i-1}}\circ\bar{\sigma}_{v_{i-1}}=\gamma_{v_i,v_{i-1}}\circ\gamma_{v_{i-1},v_{i-2}}\circ\bar{\sigma}_{v_{i-2}}\\
&=&\dots=\gamma_{v_i,v_{i-1}}\circ\dots\circ\gamma_{v_{1},v}\circ\widetilde{\sigma}_{J}\\
&=&\bar{\sigma}_{v_i};\\
\Gamma(\bar{\sigma})_{v'}&=&\max\{\gamma_{v',1}(\bar{\sigma}_1),\dots,\gamma_{v',v_k}(\bar{\sigma}_{v_k}),\dots,\gamma_{v',n}(\bar{\sigma}_{n})\}\\
&=&\max\{\widetilde{\gamma}_{v',1}(\widetilde{\sigma}_{1}),\dots,\underbrace{\gamma_{v',v_k}\circ\dots\circ\gamma_{v_1,v}\circ\widetilde{\sigma}_J
}_{\widetilde{\gamma}_{v',J}\circ\widetilde{\sigma}_J},\dots,\widetilde{\gamma}_{v',{n}}(\widetilde{\sigma}_{n})\}\\
&=&\max\{\widetilde{\gamma}_{v',1}(\widetilde{\sigma}_{1}),\dots,\widetilde{\gamma}_{v',J}\circ\widetilde{\sigma}_J,\dots,\widetilde{\gamma}_{v',n}(\widetilde{\sigma}_{n})\\
&\leq&\widetilde{\sigma}_{v'}=\bar{\sigma}_{v'}.
\end{eqnarray*}
Thus  $\Gamma(\bar{\sigma})\leq\bar{\sigma}$ and $\bar{\sigma}$ is an $\Omega$-path corresponding to the large gain matrix $\Gamma$.\\
{\it Proof of (ii)}:\\
Assume now that an $\Omega$-path $\bar{\sigma}$ for the large gain matrix $\Gamma$ is known. Let us check whether $\Omega$-path $\tilde{\sigma}$ defined in \eqref{omega_ser_2} is an  $\Omega$-path for the small gain matrix $\tilde\Gamma$. To this end we need to check \eqref{sigma cond 2'} for $\tilde{\sigma}$.
 
For the components $\tilde\Gamma(\tilde{\sigma})_w$, $w\neq v'$ the inequality  \eqref{sigma cond 2'} holds straightforwardly.  Consider now  $\tilde\Gamma(\tilde{\sigma})_{v'}$. Applying \eqref{gain_ser}, \eqref{omega_ser_2} and \eqref{sigma cond 2'} for $w\neq v'$ we obtain:
\begin{eqnarray*}
\tilde\Gamma(\tilde{\sigma})_{v'}&=&\max\{\tilde\gamma_{v',1}\circ\tilde{\sigma}_1,\ldots,\tilde\gamma_{v',J}\circ\tilde{\sigma}_J,\ldots,\tilde\gamma_{v',n}\circ\tilde{\sigma}_n\}\\
&=&\max\{\tilde\gamma_{v',1}\circ\tilde{\sigma}_1,\ldots,\max\{\gamma_{v',v_k}\circ\dots\circ\gamma_{v_2,v_1}\circ\gamma_{v_1,v},\gamma_{v',v}\}\circ\bar{\sigma}_v,\ldots,\tilde\gamma_{v',n}\circ\tilde{\sigma}_n\}\\
&\leq&\max\{\tilde\gamma_{v',1}\circ\tilde{\sigma}_1,\ldots,\gamma_{v',v_k}\circ\dots\circ\gamma_{v_2,v_1}\circ\bar{\sigma}_{v_1},\bar\sigma_{v'},\ldots,\tilde\gamma_{v',n}\circ\tilde{\sigma}_n\}\\
&&\vdots\\
&\leq&\max\{\tilde\gamma_{v',1}\circ\tilde{\sigma}_1,\ldots,\gamma_{v',v_k}\circ\bar{\sigma}_{v_k},\bar\sigma_{v'},\ldots,\tilde\gamma_{v',n}\circ\tilde{\sigma}_n\}\\
&\leq&\max\{\gamma_{v',1}\circ\bar{\sigma}_1,\ldots,\bar\sigma_{v'},\bar\sigma_{v'},\ldots,\gamma_{v',n}\circ\bar{\sigma}_n\}\\
&\leq&\bar\sigma_{v'}=\tilde\sigma_{v'}
\end{eqnarray*}
Thus  $\tilde\Gamma(\tilde{\sigma})\leq\tilde{\sigma}$ and $\tilde{\sigma}$ is an $\Omega$-path corresponding to the small gain matrix $\tilde\Gamma$.
\end{proof}
The proposition above implies the following result concerning the construction of an ISS-Lyapunov function.
\begin{corollary}
Consider a system of the form \eqref{ws} that is interconnection of subsystems
\eqref{is}. Assume that each subsystem {\it i} of \eqref{is} has
an ISS-Lyapunov function $V_i$ with the corresponding ISS-Lyapunov
gains $\gamma_{ij}, \gamma_i, i,j=1,\ldots,n$ as in \eqref{iss_lyap}. If \eqref{sgc_max} holds for $\widetilde\Gamma$ defined by \eqref{gain_ser}-\eqref{gain_ser3}. Then the system \eqref{ws} is has an ISS-Lyapunov function and an ISS-Lyapunov function is given by \eqref{Lyapunov_function} with $\sigma$ from \eqref{omega_ser}.
\end{corollary}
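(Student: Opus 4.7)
The plan is to obtain the corollary by chaining three results that have already been proved in the excerpt: Theorem~\ref{theorem_seq}, Proposition~\ref{proposition_red_seq}(i), and Theorem~\ref{thm:iss_lyap}. The only additional ingredient is Remark~\ref{omega_max}, which furnishes an explicit $\Omega$-path for any matrix that satisfies the small gain condition.

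First I would apply Theorem~\ref{theorem_seq} in the ``if'' direction: the hypothesis $\widetilde\Gamma(s)\not\geq s$ on $\R_+^{n-k}\setminus\{0\}$ is equivalent to $\Gamma(s)\not\geq s$ on $\R_+^n\setminus\{0\}$, so the original interconnection is in the scope of the Lyapunov small gain theorem. Since, by assumption, each subsystem of \eqref{is} admits an ISS-Lyapunov function $V_i$ with the Lyapunov gains collected in $\Gamma$, the assumptions of Theorem~\ref{thm:iss_lyap} are met for $\Gamma$.

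Next I would construct the $\Omega$-path needed in \eqref{Lyapunov_function}. By Remark~\ref{omega_max}, the reduced small gain condition yields an explicit $\Omega$-path $\widetilde\sigma$ for $\widetilde\Gamma$, for instance $\widetilde\sigma(t)=\widetilde Q(at)$ for a suitable $a>0$. Plugging this $\widetilde\sigma$ into \eqref{omega_ser} produces a candidate $\bar\sigma$, and Proposition~\ref{proposition_red_seq}(i) has already shown that this $\bar\sigma$ is an $\Omega$-path for $\Gamma$, i.e. that it satisfies $\Gamma(\bar\sigma)\leq\bar\sigma$ together with the regularity conditions of the $\Omega$-path definition. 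Applying Theorem~\ref{thm:iss_lyap} to $\Gamma$ with the $\Omega$-path $\bar\sigma$ then gives that the interconnection \eqref{ws} is ISS and admits the ISS-Lyapunov function
\[
V(x)=\max_{i=1,\ldots,n}\bar\sigma_i^{-1}(V_i(x_i)),
\]
which is precisely \eqref{Lyapunov_function} with $\sigma$ taken from \eqref{omega_ser}.

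I do not expect any substantive obstacle: the corollary is essentially a bookkeeping consequence of the three earlier results. The only item worth a remark is the verification of the regularity conditions (1)--(2) of the $\Omega$-path definition for $\bar\sigma$ on the indices $w=v_i$; they follow from each $\gamma_{v_j,v_{j-1}}$ along the sequential chain being a $\K_\infty$ function (so nonzero by the sequential-connection assumption), which ensures that the composition $\gamma_{v_i,v_{i-1}}\circ\cdots\circ\gamma_{v_1,v}\circ\widetilde\sigma_J$ has a locally Lipschitz inverse on $(0,\infty)$ with derivatives bounded away from $0$ and $\infty$ on compact subsets. This is already implicit in the statement of Proposition~\ref{proposition_red_seq}(i) and requires no new work.
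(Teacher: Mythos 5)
Your proposal is correct and follows essentially the same route as the paper, which simply cites Theorem~\ref{thm:iss_lyap} and Proposition~\ref{proposition_red_seq}; you have merely made explicit the intermediate steps (invoking Theorem~\ref{theorem_seq} for the equivalence of the two small gain conditions and Remark~\ref{omega_max} for the existence of an $\Omega$-path $\widetilde\sigma$ for $\widetilde\Gamma$), which the paper leaves implicit. Your closing observation that the regularity conditions (1)--(2) of the $\Omega$-path definition should be checked for $\bar\sigma$ is a fair and worthwhile remark, since the paper's proof of Proposition~\ref{proposition_red_seq} only verifies the inequality \eqref{sigma cond 2'}.
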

\begin{proof}
The assertion follows from Theorem~\ref{thm:iss_lyap} and Proposition~\ref{proposition_red_seq}.
\end{proof}

\subsection{Aggregation of nodes connected in parallel}
Parallel connections are characterized by the vertices having the same predecessor
and successor sets consisting of a single vertex. Let the vertices $\V_J:=\{v_1,\ldots,v_k\} \subset \V$ be \textit{connected in parallel}, i.e. every
vertex has only one ingoing and one outgoing edge and the ingoing edges
originate from one vertex $v \in \V$ and also the outgoing edges end in solely
one vertex $v'\in \V$, see Figure~\ref{fig:par_large_general}. To be precise, $\V_J = \{ i \in \V \, : P(i)=v,
S(i)=v'\}$.

\begin{figure}[tbh]
\centering
\begin{tikzpicture}[scale=0.8] 
\begin{scope}[>=latex]
\filldraw[fill=grau,line width=0.75pt] (2.3,3.6) circle (15pt) node {$\pmb{v}$};
\filldraw[fill=rot,line width=0.75pt] (1,1) circle (15pt) node {$\pmb{v_1}$};
\filldraw[fill=rot,line width=0.75pt] (3.6,1) circle (15pt) node {$\pmb{v_k}$};
\filldraw[fill=grau,line width=0.75pt] (2.3,-1.6) circle (15pt) node {$\pmb{v'}$};
\draw[line width=1pt] (2.3,1) node {\textbf{...}};
\draw[line width=1pt] (1,2.7) node {\textbf{$\gamma_{v_1,v}$}};
\draw[line width=1pt] (3.5,2.7) node {\textbf{$\gamma_{v_k,v}$}};
\draw[line width=1pt] (1,-0.4) node {\textbf{$\gamma_{v',v_1}$}};
\draw[line width=1pt] (3.7,-0.4) node {\textbf{$\gamma_{v',v_k}$}};
\draw[->,line width=1pt] (2.1,2.9) -- (1.2,1.65); 		
\draw[->,line width=1pt] (2.5,2.9) -- (3.4,1.65); 		
\draw[->,line width=1pt] (1.2,0.35) -- (2.1,-0.9); 		
\draw[->,line width=1pt] (3.4,0.35) -- (2.5,-0.9);		
\end{scope}
\end{tikzpicture}
\caption{Parallel connection of vertices $v_1,\ldots,v_k$.}
\label{fig:par_large_general} 
\end{figure}
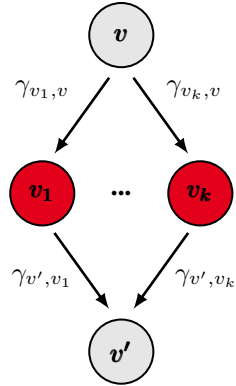

The corresponding gain matrix is given by
\begin{equation}\label{gamma_par}
\Gamma=\left(
\begin{array}{cccccccc}
\ldots&\ldots&\ldots&\ldots&\ldots&\ldots&\ldots&\ldots\\
\ldots&\ldots&0&\ldots&0&0&\ldots&\ldots\\
\ldots&0&0&\ldots&0&\gamma_{v_1,v}&0&\ldots\\
\ldots&\vdots&\vdots&\ddots&\vdots&\vdots&\vdots&\ldots\\
\ldots&0&0&\ldots&0&\gamma_{v_k,v}&0&\ldots\\
\ldots&0&\gamma_{v',v_1}&\ldots&\gamma_{v',v_k}&\dots&\ldots&\ldots\\
\ldots&\ldots&0&\ldots&0&0&\ldots&\ldots\\
\ldots&\ldots&\ldots&\ldots&\ldots&\ldots&\ldots&\ldots
\end{array}
\right).
\end{equation}
The cycle condition  \eqref{cycle} for the cycles that include nodes from $\{v_1,\dots,v_k\}$  looks as follows:
\begin{equation}\label{cycle_large_parallel}
\ldots\circ\gamma_{v',v_i}\circ\gamma_{v_i,v}\circ\ldots<\id.
\end{equation}
\subsubsection{Aggregation of gains}\quad\\
Based on this structure a possibility to attain a graph of a smaller size is to
aggregate the vertices connected in parallel to a single vertex and to leave
the structure of the remaining graph as it is. We denote the new vertex by
$J$. A cut-out of the new reduced graph is shown in Figure~\ref{fig:par_small_general}.

\begin{figure}[tbh]
\centering
\begin{tikzpicture}[scale=0.8] 
\begin{scope}[>=latex]
\filldraw[fill=grau,line width=0.75pt] (2.3,3.6) circle (15pt) node {$\pmb{J}$};
\filldraw[fill=grau,line width=0.75pt] (2.3,0.2) circle (15pt) node {$\pmb{v'}$};
\draw[line width=1pt] (1.6,1.7) node {\textbf{$\widetilde{\gamma}_{v',J}$}};
\draw[->,line width=1pt] (2.3,2.8) -- (2.3,0.85); 		
\end{scope}
\end{tikzpicture}
\caption{Aggregation of vertices $v_1,\ldots,v_k,v$.}
\label{fig:par_small_general} 
\end{figure}
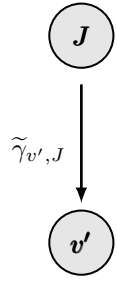

So, we consider the reduced graph $\widetilde{G}=(\widetilde{\V},\widetilde{\E},\widetilde{\Gamma})$, where the vertices are given by
\begin{align}\label{par:tildeV}
 \tilde \V=(\V\setminus (\V_J\cup \{v\})) \cup J
\end{align}
and the edges are given by
\begin{multline}\label{par:tildeE}
\tilde \E= \E \,  \setminus \, (\{(v,w),(w,v') : w \in \V_J\}\cup(v,v'))\,  \cup(J,v')\,\cup\, \{(u,J): \, (u,v) \in \E\}.
\end{multline}

The corresponding weighted adjacency matrix $\tilde \Gamma$ of the dimension $n-k$ can be obtained from $\Gamma$,
where the rows and columns corresponding to the vertices $v,v_1,\ldots,v_k$ are
replaced by a row and column corresponding to the new vertex $J$. The weights are
then given by  
\begin{eqnarray}\label{gain_parallel}
\widetilde{\gamma}_{v',J}:=\max\{\gamma_{v',v_1}\circ\gamma_{v_1,v},\dots,\gamma_{v',v_k}\circ\gamma_{v_k,v},\gamma_{v',v}
\},
\end{eqnarray}
\begin{eqnarray}\label{gain_parallel2}
\widetilde{\gamma}_{J,v'}:=\gamma_{v,v'},\quad \widetilde{\gamma}_{J,j}:=\gamma_{v,j},\quad \widetilde{\gamma}_{j,J}:=\gamma_{j,J},\quad j\in \V\setminus(\V_J\cup\{v,v'\}).
\end{eqnarray}
Other gains stay the same, i.e. 
\begin{eqnarray}\label{gain_parallel3}
\widetilde{\gamma}_{ij}:=\gamma_{ij}, i,j\neq J.
\end{eqnarray}
The small gain condition  \eqref{sgc_max} corresponding to the reduced gain matrix $\tilde \Gamma$ has the following properties.
\begin{theorem}\label{theorem_par}
Consider a gain matrix $\Gamma$ as in \eqref{gamma_par} where the nodes $\{v_1,\ldots,v_k\}$ of the corresponding graph are connected in parallel. 
Then condition \eqref{sgc_max} holds for the matrix $\Gamma$ if and only if condition \eqref{sgc_max} holds for aggregated the matrix $\tilde \Gamma$ with gains defined in \eqref{gain_parallel}-\eqref{gain_parallel3}. 

Furthermore, if there were $p$ cycles that include node $v_i$, then the number of cycles to be checked in the cycle condition \eqref{cycle} corresponding to the reduced matrix $\tilde \Gamma$ is decreased by $p(k-\delta_{v',v})$, where $\delta_{v',v}:=0$, if $\gamma_{v',v}\neq 0$ and $\delta_{v',v}:=1$ otherwise.
\end{theorem}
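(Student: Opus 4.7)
\medskip
\noindent\emph{Proof plan.}
The plan is to follow the template of Theorem~\ref{theorem_seq} and reduce the statement to the cycle characterization of the small gain condition. By \cite[Lemma~2.3.14]{Rueff07} condition \eqref{sgc_max} on a gain matrix is equivalent to the cycle condition \eqref{cycle}, so it suffices to verify that the cycles of $G$ and those of $\tilde G$ correspond, under the aggregation rule \eqref{gain_parallel}--\eqref{gain_parallel3}, in a way that preserves the scalar inequalities. Cycles that avoid $\V_J\cup\{v\}$ appear verbatim in both graphs by \eqref{gain_parallel3}, so the equivalence is trivial for them and I only need to handle cycles that meet the parallel block.

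Because each $v_i$ has $P(v_i)=\{v\}$ and $S(v_i)=\{v'\}$, any cycle of $G$ touching $\V_J$ must have the form $\ldots\to u\to v\to v_i\to v'\to u'\to\ldots\to u$ for exactly one $i\in\{1,\dots,k\}$, and any cycle using the direct edge $v\to v'$ (when present) has the form $\ldots\to u\to v\to v'\to u'\to\ldots\to u$. In $\tilde G$ every such cycle collapses to $\ldots\to u\to J\to v'\to u'\to\ldots\to u$, and by \eqref{gain_parallel}--\eqref{gain_parallel2} its composition reads
\[
\ldots\circ\widetilde\gamma_{v',J}\circ\widetilde\gamma_{J,u}\circ\ldots
=\max_{i=1,\dots,k}\bigl\{\ldots\circ\gamma_{v',v_i}\circ\gamma_{v_i,v}\circ\gamma_{v,u}\circ\ldots,\;\ldots\circ\gamma_{v',v}\circ\gamma_{v,u}\circ\ldots\bigr\},
\]
where each argument of the maximum is precisely the composition associated with one of the parent cycles in $G$ (and the last term is simply absent when $\gamma_{v',v}=0$). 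Since $\max_{j}f_{j}<\id$ is equivalent to $f_{j}<\id$ for every $j$, the cycle condition in $\tilde G$ on each collapsed cycle is equivalent to the cycle conditions in $G$ on all of its parent cycles. Combined with the trivial case this yields the claimed equivalence between \eqref{sgc_max} for $\Gamma$ and for $\tilde\Gamma$.

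For the counting assertion I would exploit the symmetry of the $v_i$: permuting $v_i\leftrightarrow v_j$ is a graph automorphism of the parallel block fixing all external vertices, so each of the $p$ cycles through a fixed $v_i$ yields, by rerouting through every $v_j$, a total of $pk$ cycles that meet $\V_J$, and additionally $p$ cycles that use the direct edge $v\to v'$ precisely when $\gamma_{v',v}\neq 0$. After aggregation all of these $pk+p(1-\delta_{v',v})$ cycles collapse onto the $p$ cycles through $J$ in $\tilde G$, so the number of cycles removed from the list checked in \eqref{cycle} equals $pk+p(1-\delta_{v',v})-p=p(k-\delta_{v',v})$, as claimed.

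The main obstacle I expect is the combinatorial bookkeeping: one has to verify that the map ``cycle of $G$ meeting $\V_J\cup\{v\}\mapsto$ collapsed cycle of $\tilde G$ through $J$'' is a well-defined surjection with fibres of size $k+1-\delta_{v',v}$, and that no cycle of $\tilde G$ visits $J$ more than once. Both points rest on the structural facts $P(v_i)=\{v\}$ and $S(v_i)=\{v'\}$ together with the distinct-vertex convention for cycles; once these observations are in place, the remainder is a routine verification of scalar inequalities in $\Kinf$.
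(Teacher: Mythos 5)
Your proof is correct and follows essentially the same route as the paper's: both reduce to the cycle condition via \cite[Lemma~2.3.14]{Rueff07}, exploit that $\widetilde\gamma_{v',J}$ is the pointwise maximum of the compositions through each $v_i$ (and $\gamma_{v',v}$) so that $\max_j f_j<\id$ unfolds into the parent cycle inequalities, and count collapsed cycles in the same way to obtain the decrease $p(k-\delta_{v',v})$. Your write-up merely makes explicit two details the paper leaves implicit — that composition by a $\Kinf$ function distributes over $\max$, and that the cycle-collapse map has fibres of size $k+1-\delta_{v',v}$ — but the underlying argument is identical.
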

\begin{proof}
Let condition \eqref{sgc_max} for the gains defined in \eqref{gain_parallel}-\eqref{gain_parallel3}  hold. Then the cycle condition  \eqref{cycle} for these gains holds. In particular, for the cycles containing the gain $\widetilde{\gamma}_{v',J}$ the following inequality holds:
\begin{equation}\label{cycle_small_parallel}
\ldots\circ\widetilde{\gamma}_{v',J}\circ\ldots<\id.
\end{equation} 
From the definition of the gain $\widetilde{\gamma}_{v',J}$ in \eqref{gain_parallel}, condition \eqref{cycle_large_parallel} holds. Condition  \eqref{cycle} on the other cycles is satisfied straightforwardly. Thus $\Gamma$ satisfies \eqref{sgc_max}.

For the proof in the opposite direction we consider the cycle condition \eqref{cycle_large_parallel} and from  \eqref{gain_parallel} obtain \eqref{cycle_small_parallel}, i.e. cycle condition for the aggregated gain $\widetilde{\gamma}_{v',J}$. The rest cycle conditions are satisfied straightforwardly. Thus, the matrix $\tilde\Gamma$ satisfies \eqref{sgc_max}.

If there were $p$ cycles that include node $v_i$ in the large graph, then the number of the cycles that include a node from $\{v_1,\ldots,v_k\}$ is $p\cdot k$. If $\gamma_{v',v}\neq 0$, then the number of cycles with nodes $\{v_1,\ldots,v_k\}$ and gain $\gamma_{v',v}$ is $p\cdot (k+1)$. After the aggregation of the gains these cycles will "coincide", thus the number of the cycles to be checked in the small gain condition \eqref{sgc_max} is decreased by $p(k-\delta_{v',v})$.
\end{proof}
Again, to show that a system of the form \eqref{ws} is ISS, it is enough to verify the small gain condition corresponding to the reduced gain matrix.
\begin{corollary}\label{cor:par}
Consider interconnected system \eqref{ws} and assume that all subsystems in \eqref{is} are ISS with gains as in \eqref{is_iss}. If condition \eqref{sgc_max} holds for the gains defined in \eqref{gain_parallel}-\eqref{gain_parallel3}, then the system \eqref{ws} is ISS.
\end{corollary}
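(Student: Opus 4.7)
The plan is to obtain the corollary as a direct chain of implications using two results already established in the paper, without any new technical content.

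First, I would appeal to Theorem~\ref{theorem_par}: under the hypothesis that the nodes $v_1,\dots,v_k$ are arranged in the parallel motif with gain matrix $\Gamma$ of the form \eqref{gamma_par}, the small gain condition \eqref{sgc_max} holds for the reduced matrix $\widetilde{\Gamma}$ (built from \eqref{gain_parallel}--\eqref{gain_parallel3}) if and only if it holds for the original matrix $\Gamma$. So, assuming \eqref{sgc_max} for $\widetilde{\Gamma}$, we immediately recover $\Gamma(s) \not\geq s$ for all $s \in \R_+^n \setminus \{0\}$.

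Second, since each subsystem of \eqref{is} is ISS with the gains $\gamma_{ij}$ collected (with the convention $\gamma_{ii} \equiv 0$) in the operator $\Gamma$ of \eqref{operator_gamma}, Theorem~\ref{thm:iss} applies directly: the small gain condition just obtained for $\Gamma$ implies that the interconnected system \eqref{ws} is ISS. This yields the assertion of the corollary.

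There is essentially no obstacle here, since all the work has already been done in Theorem~\ref{theorem_par} (the equivalence of the reduced and original small gain conditions in the parallel case) and Theorem~\ref{thm:iss} (the small gain theorem itself). The only thing to verify is that the structural hypotheses of Theorem~\ref{theorem_par} are consistent with the ISS hypotheses on the subsystems, but this is immediate from the construction of $\Gamma$ via \eqref{operator_gamma}. The proof is therefore a two-line invocation, fully parallel to the proof of Corollary~\ref{cor:seq}.
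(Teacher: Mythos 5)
Your proposal matches the paper's own proof exactly: both invoke Theorem~\ref{theorem_par} to pass from the reduced small gain condition for $\widetilde{\Gamma}$ to the original one for $\Gamma$, and then apply Theorem~\ref{thm:iss} to conclude ISS of \eqref{ws}. The paper states this as a one-line citation of the same two theorems, so your reasoning is correct and takes essentially the same route.
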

\begin{proof}
The assertion follows from Theorem~\ref{theorem_par} and Theorem~\ref{thm:iss}.
\end{proof}
\subsubsection{Construction of an $\Omega$-path}\quad\\
Again we can calculate an $\Omega$-path for a large gain matrix having an $\Omega$-path corresponding for the reduced one and in opposite direction.
\begin{proposition}\label{proposition_red_par}
Consider a gain matrix $\Gamma$ and the corresponding reduced gain matrix $\tilde \Gamma$ with gains defined in \eqref{gain_parallel}-\eqref{gain_parallel3}. Then:\\
(i) If an $\Omega$-path $\widetilde{\sigma}$  for $\tilde \Gamma$  satisfying \eqref{sigma cond 2'} is given, then an $\Omega$-path $\bar{\sigma}$ for the matrix $\Gamma$ can be constructed as 
\begin{equation}\label{omega_parallel}
\bar{\sigma}_{w}:=
\left\{
\begin{array}{ll}
\gamma_{w,v}\circ\widetilde{\sigma}_J,&\mbox{ if } w\in\{v_1,\dots,v_k\},\\
\widetilde{\sigma}_w,&\mbox{ otherwise }. 
\end{array}
\right.
\end{equation}
(ii)\quad  If an $\Omega$-path $\bar{\sigma}$ for $ \Gamma$  satisfying \eqref{sigma cond 2'} is given, then an $\Omega$-path $\tilde{\sigma}$ for the matrix $\tilde\Gamma$ can be constructed as 
\begin{equation}\label{omega_parallel_2}
\bar{\sigma}_{w}:=
\left\{
\begin{array}{ll}
\bar{\sigma}_w &\mbox{ if } w\in\tilde \V\setminus J,\\
\bar{\sigma}_v,&\mbox{ if } w=J;
\end{array}
\right.
\end{equation}
\end{proposition}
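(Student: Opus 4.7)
The plan is to mirror the argument used for Proposition~\ref{proposition_red_seq}. In each of (i) and (ii) three conditions from the definition of an $\Omega$-path must be verified: $\K_{\infty}^n$-membership, the local Lipschitz / derivative bounds on the coordinatewise inverses, and the fixed-point-type inequality \eqref{sigma cond 2'}. The first two conditions are inherited for free: in (i) each $\bar\sigma_{v_i}=\gamma_{v_i,v}\circ\widetilde\sigma_J$ is a composition of $\K_\infty$-maps, so regularity is passed down from $\widetilde\sigma$ and from the gain $\gamma_{v_i,v}$; in (ii) the path $\widetilde\sigma$ is essentially a restriction of $\bar\sigma$ to the vertex set $\widetilde\V$ (with $\widetilde\sigma_J:=\bar\sigma_v$), so every required property is inherited from $\bar\sigma$. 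The substantive work is to verify $\Gamma(\bar\sigma)\leq\bar\sigma$ in (i) and $\widetilde\Gamma(\widetilde\sigma)\leq\widetilde\sigma$ in (ii).

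For (i) I will check the inequality coordinate by coordinate, exploiting two structural facts of the parallel motif: the only predecessor of each $v_i$ is $v$, and the only successor of each $v_i$ is $v'$. For $w\notin\V_J\cup\{v'\}$, the successor property forces $\gamma_{w,v_i}\equiv 0$ for all $i$, so the max collapses onto indices outside $\V_J$; by \eqref{gain_parallel2}--\eqref{gain_parallel3} it equals $\widetilde\Gamma(\widetilde\sigma)_w\leq\widetilde\sigma_w=\bar\sigma_w$. For $w=v_i$ only the $j=v$ term survives and yields $\Gamma(\bar\sigma)_{v_i}=\gamma_{v_i,v}(\widetilde\sigma_J)=\bar\sigma_{v_i}$. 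For $w=v'$ the $k+1$ contributions coming from $j=v$ and $j=v_1,\ldots,v_k$ collapse exactly to $\widetilde\gamma_{v',J}(\widetilde\sigma_J)$, because $\widetilde\gamma_{v',J}$ is \emph{defined} in \eqref{gain_parallel} as the maximum of precisely those compositions, so the bound reduces once more to the $\Omega$-path inequality for $\widetilde\sigma$ at $v'$.

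For (ii) the coordinates $w\notin\{v',J\}$ are immediate: once more $\gamma_{w,v_i}\equiv 0$ (since $w\neq v'$), and the max agrees with $\Gamma(\bar\sigma)_w\leq\bar\sigma_w$. The case $w=J$ uses $\gamma_{v,v_i}\equiv 0$ (edges go $v\to v_i$, not the reverse) to reduce $\widetilde\Gamma(\widetilde\sigma)_J$ to $\Gamma(\bar\sigma)_v\leq\bar\sigma_v=\widetilde\sigma_J$. The main obstacle is the case $w=v'$: here $\widetilde\gamma_{v',J}(\widetilde\sigma_J)$ unfolds into a maximum containing the compositions $\gamma_{v',v_i}\circ\gamma_{v_i,v}(\bar\sigma_v)$, and these must be bounded by the original terms $\gamma_{v',v_i}(\bar\sigma_{v_i})$ that appear in $\Gamma(\bar\sigma)_{v'}$. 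The key step is to apply the $\Omega$-path property of $\bar\sigma$ at each $v_i$, namely $\gamma_{v_i,v}(\bar\sigma_v)=\Gamma(\bar\sigma)_{v_i}\leq\bar\sigma_{v_i}$; monotonicity of $\gamma_{v',v_i}$ then provides the required majorization, after which $\widetilde\Gamma(\widetilde\sigma)_{v'}\leq\Gamma(\bar\sigma)_{v'}\leq\bar\sigma_{v'}=\widetilde\sigma_{v'}$ closes the argument.
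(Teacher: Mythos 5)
Your proof is correct and follows essentially the same coordinatewise argument the paper uses: in part (i) you expand $\Gamma(\bar\sigma)_{v'}$, recognize the $k+1$ terms coming from $v,v_1,\ldots,v_k$ as exactly $\widetilde\gamma_{v',J}(\widetilde\sigma_J)$, and invoke $\widetilde\Gamma(\widetilde\sigma)\leq\widetilde\sigma$; in part (ii) you unfold $\widetilde\gamma_{v',J}$, bound $\gamma_{v_i,v}(\bar\sigma_v)=\Gamma(\bar\sigma)_{v_i}\leq\bar\sigma_{v_i}$, and then dominate by $\Gamma(\bar\sigma)_{v'}\leq\bar\sigma_{v'}$ — these are precisely the steps in the paper's chain of inequalities. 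You also explicitly note that conditions 1--2 of the $\Omega$-path definition are inherited and justify the "straightforward" coordinates via the motif's unique predecessor/successor structure, which the paper leaves tacit; this is a sound elaboration rather than a different route.
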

\begin{proof}\quad\\
{\it Proof of (i)}:\\
We assume that an $\Omega$-path $\widetilde{\sigma}$ for the small gain matrix $\widetilde{\sigma}$ is known. In particular, by \eqref{sigma cond 2'}
 $\widetilde{\Gamma}(\widetilde{\sigma})\leq\widetilde{\sigma}$ holds.
Let us check whether an $\Omega$-path $\bar{\sigma}$ defined in \eqref{omega_parallel} is an  $\Omega$-path for the large matrix $\Gamma$. To this end we need to check \eqref{sigma cond 2'}.

For the components $\Gamma(\bar{\sigma})_w$, $w\not\in\{v_1,\dots,v_k,v'\}$ the inequality \eqref{sigma cond 2'} holds straightforwardly.  Consider now  $\Gamma(\bar{\sigma})_w$, $w\in\{v_1,\dots,v_k\}$. Applying \eqref{gain_parallel}-\eqref{gain_parallel3} and \eqref{omega_parallel} we obtain:
\begin{eqnarray*}
\Gamma(\bar{\sigma})_{w}&=&\gamma_{w,v}\circ\bar{\sigma}_v=\widetilde{\sigma}_w;\\
\Gamma(\bar{\sigma})_{v'}&=&\max\{\gamma_{v',1}(\bar{\sigma}_1),\dots,\gamma_{v',v_1}(\bar{\sigma}_{v_1}),\dots,\gamma_{v',v_1}(\bar{\sigma}_{v_k}),\dots,\gamma_{v',n}(\bar{\sigma}_n)\}\\
&=&\max\{\widetilde{\gamma}_{v',1}(\widetilde{\sigma}_{1}),\dots,\underbrace{\gamma_{v',v_1}{\circ}\gamma_{v_1,v}{\circ}\widetilde{\sigma}_J,\dots,
\gamma_{v',v_k}{\circ}\gamma_{v_k,v}{\circ}\widetilde{\sigma}_J}_{\widetilde{\gamma}_{v',J}{\circ}\widetilde{\sigma}_J},\dots,\widetilde{\gamma}_{v',n}(\widetilde{\sigma}_n)\}\\
&=&\max\{\widetilde{\gamma}_{v',1}(\widetilde{\sigma}_{1}),\dots,\widetilde{\gamma}_{v',J}\circ\widetilde{\sigma}_J,\dots,\widetilde{\gamma}_{v',n}(\widetilde{\sigma}_{n})\\
&\leq&\widetilde{\sigma}_{v'}=\bar{\sigma}_{v'}.
\end{eqnarray*}
Thus  $\Gamma(\bar{\sigma})\leq\bar{\sigma}$ and $\bar{\sigma}$ is an $\Omega$-path corresponding to the large gain matrix $\Gamma$.\\
{\it Proof of (ii)}:\\
Assume now that an $\Omega$-path $\bar{\sigma}$ for the large gain matrix $\Gamma$ is known. Let us check whether $\Omega$-path $\tilde{\sigma}$ defined in \eqref{omega_parallel_2} is an  $\Omega$-path for the small gain matrix $\tilde\Gamma$. To this end we need to check \eqref{sigma cond 2'} for $\tilde{\sigma}$.
 
For the components $\tilde\Gamma(\tilde{\sigma})_w$, $w\neq v'$ the inequality  \eqref{sigma cond 2'} holds straightforwardly.  Consider now  $\tilde\Gamma(\tilde{\sigma})_{v'}$. Applying \eqref{gain_parallel}, \eqref{omega_parallel_2} and \eqref{sigma cond 2'} for $w\neq v'$ we obtain:
\begin{eqnarray*}
\tilde\Gamma(\tilde{\sigma})_{v'}&=&\max\{\tilde\gamma_{v',1}\circ\tilde{\sigma}_1,\ldots,\tilde\gamma_{v',J}\circ\tilde{\sigma}_J,\ldots,\tilde\gamma_{v',n}\circ\tilde{\sigma}_n\}\\
&=&\max\{\tilde\gamma_{v',1}\circ\tilde{\sigma}_1,\ldots,\max\{\gamma_{v',v_1}\circ\gamma_{v_1,v},\dots,\gamma_{v',v_k}\circ\gamma_{v_k,v},\gamma_{v',v}\}\circ\bar{\sigma}_v,\ldots,\tilde\gamma_{v',n}\circ\tilde{\sigma}_n\}\\
&\leq&\max\{\tilde\gamma_{v',1}\circ\tilde{\sigma}_1,\ldots,\gamma_{v',v_1}\circ\bar\sigma_{v_1},\dots,\gamma_{v',v_k}\circ\bar\sigma_{v_k},\bar\sigma_{v'},\ldots,\tilde\gamma_{v',n}\circ\tilde{\sigma}_n\}\\
&\leq&\max\{\gamma_{v',1}\circ\bar{\sigma}_1,\ldots,\bar\sigma_{v'},\ldots,\gamma_{v',n}\circ\bar{\sigma}_n\}\\
&\leq&\bar\sigma_{v'}=\tilde\sigma_{v'}
\end{eqnarray*}
Thus  $\tilde\Gamma(\tilde{\sigma})\leq\tilde{\sigma}$ and $\tilde{\sigma}$ is an $\Omega$-path corresponding to the small gain matrix $\tilde\Gamma$.
\end{proof}
\begin{corollary}
Consider a system of the form \eqref{ws} that is an interconnection of the subsystems
\eqref{is}. Assume that each subsystem {\it i} of \eqref{is} has
an ISS Lyapunov function $V_i$ with corresponding ISS-Lyapunov
gains $\gamma_{ij}, \gamma_i, i,j=1,\ldots,n$ as in \eqref{iss_lyap}. if \eqref{sgc_max} holds for $\widetilde\Gamma$ defined by \eqref{gain_parallel}-\eqref{gain_parallel3}, then the system \eqref{ws} is has an ISS-Lyapunov function and an ISS-Lyapunov function is given by \eqref{Lyapunov_function} with $\sigma$ from \eqref{omega_parallel}.
\end{corollary}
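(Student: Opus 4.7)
The plan is to chain together the previously established results in the paper. The corollary is the parallel-connection analogue of the sequential-connection corollary that precedes it, and I expect the proof to be essentially a one-line invocation of Theorem~\ref{thm:iss_lyap} combined with Proposition~\ref{proposition_red_par}(i), exactly as the sequential version invoked Proposition~\ref{proposition_red_seq}.

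More concretely, I would proceed in three short steps. First, since the small gain condition \eqref{sgc_max} is assumed to hold for $\widetilde\Gamma$, Theorem~\ref{theorem_par} yields that \eqref{sgc_max} also holds for the full matrix $\Gamma$. Second, I invoke Remark~\ref{omega_max} applied to $\widetilde\Gamma$ to guarantee the existence of an $\Omega$-path $\widetilde\sigma$ with respect to $\widetilde\Gamma$; then Proposition~\ref{proposition_red_par}(i) lifts $\widetilde\sigma$ to an $\Omega$-path $\bar\sigma$ for $\Gamma$ via the formula \eqref{omega_parallel}. Third, since each subsystem of \eqref{is} admits an ISS-Lyapunov function $V_i$ with gains $\gamma_{ij}$, $\gamma_i$, the hypotheses of Theorem~\ref{thm:iss_lyap} are met, so the function
\[
V(x)=\max_{i=1,\ldots,n}\bar\sigma_i^{-1}(V_i(x_i))
\]
is an ISS-Lyapunov function for \eqref{ws}. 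Substituting the explicit form of $\bar\sigma$ from \eqref{omega_parallel} gives exactly the expression \eqref{Lyapunov_function} with $\sigma$ from \eqref{omega_parallel}, as asserted.

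There is essentially no obstacle here: all the nontrivial work has already been done in Theorem~\ref{theorem_par} (which provides equivalence of the small gain conditions for $\Gamma$ and $\widetilde\Gamma$) and in Proposition~\ref{proposition_red_par}(i) (which provides the lifting of an $\Omega$-path). The only minor point worth flagging is to check that the lifted $\bar\sigma$ produced from \eqref{omega_parallel} is a bona fide $\Omega$-path, i.e.\ that it also satisfies the regularity conditions \eqref{sigma cond 1} on local Lipschitz continuity of $\bar\sigma_i^{-1}$ and the uniform derivative bounds; but since the components of $\bar\sigma$ are either components of $\widetilde\sigma$ or compositions of the class-$\Kinf$ gains $\gamma_{w,v}$ with $\widetilde\sigma_J$, these regularity requirements are inherited from those of $\widetilde\sigma$, and no separate verification beyond what is implicit in the statement of Proposition~\ref{proposition_red_par} is needed. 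Thus the corollary follows immediately from Theorem~\ref{thm:iss_lyap} and Proposition~\ref{proposition_red_par}.
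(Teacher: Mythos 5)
Your proof is correct and follows essentially the same route as the paper, which simply states that the assertion follows from Theorem~\ref{thm:iss_lyap} and Proposition~\ref{proposition_red_par}. You merely spell out the intermediate steps (existence of $\widetilde\sigma$ via Remark~\ref{omega_max}, lifting via Proposition~\ref{proposition_red_par}(i), and the regularity check) that the paper leaves implicit.
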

\begin{proof}
The assertion follows from Theorem~\ref{thm:iss_lyap} and Proposition~\ref{proposition_red_par}.
\end{proof}
\subsection{Aggregation of almost disconnected subgraphs}
A further structure in the network, that suggests itself to a reduction is
given by subgraphs which are connected to the remainder of the network through
just a single vertex. So, we consider a set of vertices $\V_J=\{v_1,...,v_k\}$
and a distinguished vertex $v^*\in \V\setminus \V_J$ 
such that any path from $v_i, i=1,\ldots,l$ to the remainder of the vertices in
$\V\setminus \V_J$, and any path from $V\setminus \V_J$ to $\V_J$ necessarily
passes through the vertex $v^*$. If we assume that the whole graph is
strongly connected, this implies in particular, that the subgraph induced by $\V_J \cup \{
v^*\}$ is by itself strongly connected.

In Figure~\ref{pic:subgraph_large} an example graph is shown, where the
vertices $\V_J =\{v_1,\ldots,v_k\} $ are connected with the rest of
the graph only through the vertex $v^{\ast}$.
\begin{figure}[tbh]
\centering
\begin{tikzpicture}[scale=0.6] 
\begin{scope}[>=latex]
\filldraw[fill=rot,line width=0.75pt] (0,5) circle (15pt) node {$\pmb{v_1}$};
\filldraw[fill=rot,line width=0.75pt] (2.1,6.65) circle (15pt) node {$\pmb{v_2}$};
\filldraw[fill=rot,line width=0.75pt] (2.5,4) circle (15pt) node {$\pmb{v_3}$};
\draw[dashed] (1.533,5.2167) circle (68pt);
\filldraw[fill=grau,line width=0.75pt] (4.5,2) circle (15pt) node {$\pmb{v^*}$};
\filldraw[fill=grau,line width=0.75pt] (6.75,3) circle (15pt); 
\filldraw[fill=grau,line width=0.75pt] (4.5,-0.5) circle (15pt); 
\filldraw[fill=grau,line width=0.75pt] (6.75,-1.5) circle (15pt); 
\filldraw[fill=grau,line width=0.75pt] (8.5,0.75) circle (15pt); 
\draw[->,line width=1pt] (1.875,4.25) -- (0.625,4.75); 			
\draw[->,line width=1pt] (0.525,5.4125) -- (1.575,6.2375); 		
\draw[<-,line width=1pt] (2.3325,6.0075) -- (2.5325,4.6825); 		
\draw[->,line width=1pt] (2.0675,5.9675) -- (2.2675,4.6425); 		
\draw[->,line width=2pt] (4,2.3) -- (2.8,3.5);			
\draw[->,line width=2pt] (3,3.7) -- (4.2,2.5);		
\draw[->,line width=1pt] (6.1875,2.75) -- (5.0625,2.25);
\draw[->,line width=1pt] (8.0625,1.3125) -- (7.1875,2.4375);
\draw[->,line width=1pt] (5.1,-0.3125) -- (7.9,0.5625);
\draw[->,line width=1pt] (6.75,2.325) -- (6.75,-0.825);
\draw[->,line width=1pt] (4.5,1.375) -- (4.5,0.125);
\draw[->,line width=1pt] (7.1875,-0.9375) -- (8.0625,0.1875);
\draw[->,line width=1pt] (6.1375,-1.3625) -- (5.0125,-0.8625);
\draw[->,line width=1pt] (5.1125,-0.6375) -- (6.2375,-1.1375);
\end{scope}
\end{tikzpicture}
\caption{The subgraph consisting of the vertices $\V_J=\{v_1,v_2,v_3\}$ is almost disconnected from the graph.}
\label{pic:subgraph_large}
\end{figure}
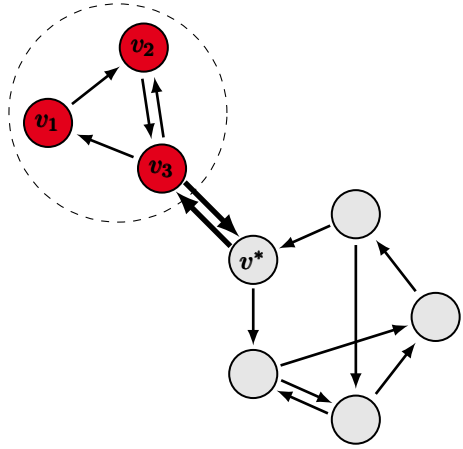

The cycles in \eqref{cycle} that include nodes only from $\{v_1,\dots,v_k,v^*\}$ look as follows:
\begin{equation}\label{cycle_subgraph}
\gamma_{k_1,k_2} \circ \gamma_{k_2,k_3} \circ \dots \circ \gamma_{k_{r-1},k_r} < \id,
\end{equation}
for all $(k_1,...,k_r) \in \{v_1,...,v_k,v^*\}^r$ with $k_1=k_r$.
\subsubsection{Aggregation of gains}\quad\\
To reduce the network size we aggregate the vertices of the subgraph $\V_J$ with vertex $v^*$ 
and do not change the remainder of the graph. We denote the
new vertex by $J$. For the example in Figure~\ref{pic:subgraph_large} the
reduced graph is shown in Figure~\ref{pic:subgraph_small}. So we consider the
reduced graph $\tilde G=(\tilde \V,\tilde \E,\tilde A)$, where the vertices are
given by
\begin{align}
\label{subgraphvertices}
\tilde \V= \left(\V\setminus (\V_J\cup \{v^*\} )\right) \cup J
\end{align}
and the edges are given by
\begin{multline}\label{subgraphedges}
\tilde \E= \E\,  \setminus \, \{(w_1,w_2),(v^*,w_1),(w_1,v^*): w_1,w_2 \in \V_J\}\, \\ 
\cup\,\{(J,u): \, u\in \tilde \V,(v^*,u)\in \E\}\\
\cup\,\{(u,J): \, u\in \tilde \V,(u,v^*)\in \E\}.
\end{multline}

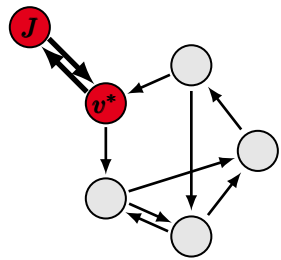
\begin{figure}[tbh]
\centering
\begin{tikzpicture}[scale=0.5] 
\begin{scope}[>=latex]
\filldraw[fill=rot,line width=0.75pt] (4.5,2) circle (15pt) node {$\pmb{v^*}$};
\filldraw[fill=grau,line width=0.75pt] (6.75,3) circle (15pt); 
\filldraw[fill=grau,line width=0.75pt] (4.5,-0.5) circle (15pt); 
\filldraw[fill=grau,line width=0.75pt] (6.75,-1.5) circle (15pt); 
\filldraw[fill=grau,line width=0.75pt] (8.5,0.75) circle (15pt); 
\filldraw[fill=rot,line width=0.75pt] (2.5,4) circle (15pt) node {$\pmb{J}$};

\draw[->,line width=2pt] (4,2.3) -- (2.8,3.5);			
\draw[->,line width=2pt] (3,3.7) -- (4.2,2.5);		
\draw[->,line width=1pt] (6.1875,2.75) -- (5.0625,2.25);
\draw[->,line width=1pt] (8.0625,1.3125) -- (7.1875,2.4375);
\draw[->,line width=1pt] (5.1,-0.3125) -- (7.9,0.5625);
\draw[->,line width=1pt] (6.75,2.325) -- (6.75,-0.825);
\draw[->,line width=1pt] (4.5,1.375) -- (4.5,0.125);
\draw[->,line width=1pt] (7.1875,-0.9375) -- (8.0625,0.1875);
\draw[->,line width=1pt] (6.1375,-1.3625) -- (5.0125,-0.8625);
\draw[->,line width=1pt] (5.1125,-0.6375) -- (6.2375,-1.1375);
\end{scope}
\end{tikzpicture}
\caption{Subgraph $V_J$ and node $v^*$ are merged to vertex $J$.}
\label{pic:subgraph_small}
\end{figure}

The corresponding weighted adjacency matrix $\tilde \Gamma$ of the dimension $n-k+1$ can be obtained from $\Gamma$,
where the rows and columns corresponding to the vertices $v_1,\ldots,v_k$ are
replaced by a row and column corresponding to new vertex $J$. The weights are
then given by 

\begin{eqnarray}\label{gain_subgraph}
\widetilde{\gamma}_{J,v^{\ast}}:=\max_{ (k_1,...,k_r) \in \{v_1,...,v_k,v^*\}^r, k_1=k_r}\{\gamma_{k_1,k_2} \circ \gamma_{k_2,k_3} \circ \dots \circ \gamma_{k_{r-1},k_r}
\},
\end{eqnarray}
\begin{eqnarray}\label{gain_subgraph2}
\widetilde{\gamma}_{v^*,J}=\id.
\end{eqnarray}
Other gains stay the same, i.e. 
\begin{equation}\label{gain_subgraph3}
\widetilde{\gamma}_{ij}:=\gamma_{ij},\, i,j\neq J.
\end{equation}
\begin{theorem}\label{theorem_subgraph} 
Consider a gain matrix $\Gamma$ where the subgraph $\{v_1,\ldots,v_k\}$ of the corresponding graph is strongly connected connected and connected to the remainder of the graph only through one node. Then condition \eqref{sgc_max} holds for the matrix $\Gamma$ if and only if condition \eqref{sgc_max} holds for aggregated the matrix $\tilde \Gamma$ with gains defined in \eqref{gain_subgraph}-\eqref{gain_subgraph3}. 

If there were $p$ cycles that include nodes only from $\V_J\cup\{v^*\}$, then the number of cycles to be checked in the cycle condition \eqref{cycle} corresponding to the reduced matrix $\tilde \Gamma$ is decreased by $p-1$.
\end{theorem}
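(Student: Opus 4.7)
The plan is to mirror the pattern of the proofs of Theorem~\ref{theorem_seq} and Theorem~\ref{theorem_par}: invoke the equivalence between the small gain condition \eqref{sgc_max} and the cycle condition \eqref{cycle} from \cite[Lemma~2.3.14]{Rueff07}, and then check that the set of cycle inequalities for $\Gamma$ is logically equivalent to the set for $\tilde \Gamma$.

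The structural observation that drives the argument is that, because $\V_J$ communicates with the rest of the graph only through $v^*$, every simple cycle of the graph associated with $\Gamma$ is either contained in $\V\setminus \V_J$ or contained in $\V_J\cup\{v^*\}$. Indeed, a simple cycle hitting some $v_i\in \V_J$ must both enter and leave $\V_J$, and the only available entry/exit is $v^*$; since a simple cycle can visit $v^*$ at most once, it cannot escape $\V_J\cup\{v^*\}$. This splits the cycle condition for $\Gamma$ into two independent families: (a) cycles lying in $\V\setminus \V_J$, whose gains are untouched by the aggregation thanks to \eqref{gain_subgraph3}; and (b) cycles lying in $\V_J\cup\{v^*\}$, whose compositions are exactly the terms appearing inside the max that defines $\tilde\gamma_{J,v^*}$ in \eqref{gain_subgraph}.

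Next I would apply the same almost-disconnected property to the reduced graph: $J$ is connected to the rest of $\tilde\V$ only through $v^*$, so the only new simple cycle introduced by the aggregation is $J\to v^* \to J$, with composed gain $\tilde\gamma_{J,v^*}\circ\tilde\gamma_{v^*,J}=\tilde\gamma_{J,v^*}$ by \eqref{gain_subgraph2}. Hence requiring $\tilde\gamma_{J,v^*}<\id$ is, via the max in \eqref{gain_subgraph}, precisely equivalent to requiring the cycle condition on every simple cycle inside $\V_J\cup\{v^*\}$. Combined with the unchanged cycles in $\V\setminus \V_J$, this yields the equivalence of the cycle conditions for $\Gamma$ and $\tilde \Gamma$, and therefore of the small gain conditions. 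The counting claim then follows immediately: the $p$ simple cycles that previously lived inside $\V_J\cup\{v^*\}$ all collapse into the single cycle $J\to v^*\to J$, a net reduction of $p-1$ inequalities to be checked.

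The main delicate point is the structural claim that any simple cycle touching $\V_J$ is trapped inside $\V_J\cup\{v^*\}$; it is where the almost-disconnected hypothesis enters in an essential way, and it is also where one must read the cycle condition in its simple-cycle form (so that $v^*$ cannot be revisited). Once that step is secured, the rest of the argument is a direct bookkeeping exercise against the definitions \eqref{gain_subgraph}--\eqref{gain_subgraph3} and poses no further difficulty.
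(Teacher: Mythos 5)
Your proposal is correct and follows the same route as the paper: pass to the cycle condition, observe that the aggregation replaces the family of simple cycles inside $\V_J\cup\{v^*\}$ by the single cycle $J\to v^*\to J$ whose gain $\tilde\gamma_{J,v^*}\circ\tilde\gamma_{v^*,J}=\tilde\gamma_{J,v^*}$ is the max over those cycle compositions, while all other cycle inequalities are untouched. The one substantive thing you add is the explicit structural lemma that the paper buries in the word ``straightforwardly'': a simple cycle meeting $\V_J$ cannot leave $\V_J\cup\{v^*\}$, because entry and exit are only through $v^*$ and a simple cycle visits $v^*$ at most once. That observation is exactly what makes the two families of cycles disjoint and makes the bookkeeping (and the $p-1$ count) go through, so it is worth stating even though the paper skips it.
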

\begin{proof}
Let condition \eqref{sgc_max} for the gains defined in \eqref{gain_subgraph}-\eqref{gain_subgraph3}  hold. Then the cycle condition \eqref{cycle} for these gains holds. In particular, for the cycles containing $\widetilde{\gamma}_{v^{\ast},J}$, $\widetilde{\gamma}_{J,v^*}$ the following inequality holds:
\begin{equation}\label{cycle_small_subgraph}
\widetilde{\gamma}_{v^*,J}\circ\widetilde{\gamma}_{J,v^*}<\id.
\end{equation} 
From the definition of the gains $\widetilde{\gamma}_{J,v^*}$ and $\widetilde{\gamma}_{v^*,J}$ in \eqref{gain_subgraph} and \eqref{gain_subgraph2}, condition \eqref{sgc_max} for the large matrix $\Gamma$ holds. Conditions on the other cycles in \eqref{cycle} are satisfied straightforwardly. 

For the proof in the opposite direction we consider the cycle condition \eqref{cycle_subgraph} and from  \eqref{gain_subgraph} and \eqref{gain_subgraph2} we obtain \eqref{cycle_small_subgraph}, i.e. cycle condition for the aggregated gains $\widetilde{\gamma}_{v^{\ast},J}$, $\widetilde{\gamma}_{J,v^*}$. The rest cycle conditions are satisfied straightforwardly. Thus, the matrix $\tilde\Gamma$ satisfies \eqref{sgc_max}.

As instead of  $p$ cycles with nodes only from $V_J\cup\{v^*\}$ we consider only one cycle $\widetilde{\gamma}_{v^*,J}\circ\widetilde{\gamma}_{J,v^*}$, the number of cycles corresponding to the small gain matrix $\tilde \Gamma$ is decreased by $p-1$.
\end{proof}
\begin{corollary}\label{cor:subgraph}
Consider interconnected system \eqref{ws} and assume that all subsystems in \eqref{is} are ISS with gains as in \eqref{is_iss}. If condition \eqref{sgc_max} holds for the gains defined in \eqref{gain_subgraph}-\eqref{gain_subgraph3}, then the system \eqref{ws} is ISS.
\end{corollary}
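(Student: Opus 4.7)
The plan is to mirror the argument used for Corollary~\ref{cor:seq} and Corollary~\ref{cor:par}, since the structural setup here is identical: we have an aggregation theorem relating the reduced small gain condition to the original one, and we have the classical small gain theorem giving ISS from the original condition. So the proof should be a short chaining of two results already available to us.

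First, I would invoke Theorem~\ref{theorem_subgraph}. Its statement is that, for the almost disconnected subgraph motif, the small gain condition \eqref{sgc_max} holds for the reduced gain matrix $\widetilde{\Gamma}$ (built from the weights \eqref{gain_subgraph}--\eqref{gain_subgraph3}) if and only if the small gain condition \eqref{sgc_max} holds for the original matrix $\Gamma$. Since the hypothesis of the corollary is exactly that \eqref{sgc_max} holds for the reduced weights, this immediately transports the small gain condition back to $\Gamma$.

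Second, with the small gain condition $\Gamma(s)\not\geq s$ for all $s\in\R_+^n\setminus\{0\}$ now in hand, and since by assumption every subsystem in \eqref{is} is ISS with gains $\gamma_{ij}$ as in \eqref{is_iss} that form exactly the entries of $\Gamma$ via \eqref{operator_gamma}, the hypotheses of Theorem~\ref{thm:iss} are fully satisfied. Theorem~\ref{thm:iss} then directly yields ISS of the interconnection \eqref{ws}.

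There is essentially no obstacle here: the corollary is a pure composition of Theorem~\ref{theorem_subgraph} (equivalence of the two small gain conditions) and Theorem~\ref{thm:iss} (small gain implies ISS). The only thing to be mildly careful about is making explicit that the reduced matrix $\widetilde{\Gamma}$ used in the hypothesis is precisely the one produced by the aggregation rules \eqref{gain_subgraph}--\eqref{gain_subgraph3}, so that Theorem~\ref{theorem_subgraph} applies verbatim; once that alignment is noted, the proof is a one-line chaining of the two theorems.
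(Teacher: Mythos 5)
Your proposal matches the paper's proof exactly: the paper simply states that the assertion follows from Theorem~\ref{theorem_subgraph} and Theorem~\ref{thm:iss}, which is precisely the two-step chaining you describe.
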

\begin{proof}
The assertion follows from Theorem~\ref{theorem_subgraph} and Theorem~\ref{thm:iss}.
\end{proof}
\subsubsection{Construction of an $\Omega$-path}\quad\\
Again, we can calculate an $\Omega$-path for a large gain matrix having an $\Omega$-path corresponding for a reduced one.
\begin{proposition}\label{proposition_red_subgraph}
Consider a gain matrix $\Gamma$ and the corresponding reduced gain matrix $\tilde \Gamma$ with gains defined in \eqref{gain_subgraph}-\eqref{gain_subgraph3}. \\
(i) \quad If an $\Omega$-path $\widetilde{\sigma}$ for $\tilde \Gamma$ satisfying \eqref{sigma cond 2'} is given, then an $\Omega$-path $\bar{\sigma}$ for the matrix $\Gamma$ can be constructed as:
\begin{equation}\label{omega_subgraph}
\bar{\sigma}_{w}:=
\left\{
\begin{array}{ll}
\hat{\gamma}_{w,J}\circ\widetilde{\sigma}_J,&\mbox{ if } w\in\{v_1,\dots,v_k\},\\
\widetilde{\sigma}_w,&\mbox{ otherwise },
\end{array}
\right.
\end{equation} 
where 
\begin{equation}\label{path_subgraph}
\hat{\gamma}_{w,J}:=\max\limits_{(k_1,\ldots,k_r)\in\{v_1,\ldots,v_k\}^r,k_i\neq k_j}\{\gamma_{w,v_{k_1}}\circ\gamma_{v_{k_1},v_{k_2}}\circ\ldots\circ\gamma_{v_{k_r},J}\},
\end{equation}
i.e. the maximum over compositions of minimal paths (all nodes are different) from node $w$ to node $J$. \\
(ii)\quad  If an $\Omega$-path $\bar{\sigma}$ for $ \Gamma$  satisfying \eqref{sigma cond 2'} is given, then an $\Omega$-path $\tilde{\sigma}$ for the matrix $\tilde\Gamma$ can be constructed as 
\begin{equation}\label{omega_subgraph_2}
\bar{\sigma}_{w}:=
\left\{
\begin{array}{ll}
\bar{\sigma}_w &\mbox{ if } w\in\tilde \V\setminus J,\\
\bar{\sigma}_{v^*},&\mbox{ if } w=J;
\end{array}
\right.
\end{equation}
\end{proposition}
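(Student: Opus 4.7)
The strategy is to mimic the structure of the proofs of Propositions~\ref{proposition_red_seq} and~\ref{proposition_red_par}: verify the defining inequality $\Gamma(\sigma)\leq\sigma$ for an $\Omega$-path componentwise, splitting the vertex set according to its location relative to the aggregated subgraph.

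For part~(i), assuming $\widetilde{\Gamma}(\widetilde{\sigma})\leq\widetilde{\sigma}$, I would verify $\Gamma(\bar{\sigma})_w\leq\bar{\sigma}_w$ in three cases. When $w\not\in\V_J\cup\{v^*\}$, the edges into $w$ are unchanged by \eqref{gain_subgraph3} and $\bar{\sigma}_w=\widetilde{\sigma}_w$, so the inequality is inherited directly from the hypothesis. When $w=v^*$, I would split $P(v^*)$ into external predecessors (handled as above) and internal predecessors $j\in\V_J$; each internal contribution $\gamma_{v^*,j}\circ\hat{\gamma}_{j,J}\circ\widetilde{\sigma}_J$ unfolds, by \eqref{path_subgraph}, into a maximum of compositions of the form $\gamma_{v^*,j}\circ\gamma_{j,v_{k_1}}\circ\dots\circ\gamma_{v_{k_r},v^*}$, which are cycle gains on the induced subgraph $\V_J\cup\{v^*\}$ and are therefore dominated by $\widetilde{\gamma}_{J,v^*}$ by \eqref{gain_subgraph}; invoking $\widetilde{\Gamma}(\widetilde{\sigma})_J\leq\widetilde{\sigma}_J$ then closes the bound. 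When $w\in\V_J$, $P(w)\subseteq\V_J\cup\{v^*\}$ by almost-disconnectedness; the contribution of $j=v^*$ is the trivial length-one path from $v^*$ to $w$ that already appears in $\hat{\gamma}_{w,J}$, while for $j\in\V_J$ the contribution $\gamma_{w,j}\circ\hat{\gamma}_{j,J}\circ\widetilde{\sigma}_J$ is a maximum over compositions along paths from $v^*$ to $w$ through $j$, each of which reduces (after excising any inner cycles) to a minimal path entering the maximum defining $\hat{\gamma}_{w,J}\circ\widetilde{\sigma}_J=\bar{\sigma}_w$.

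For part~(ii), given $\bar{\sigma}$ with $\Gamma(\bar{\sigma})\leq\bar{\sigma}$ and $\tilde{\sigma}$ defined by \eqref{omega_subgraph_2}, I would verify $\widetilde{\Gamma}(\tilde{\sigma})\leq\tilde{\sigma}$. For $w\neq J$ the inequality is immediate. For $w=J$, the predecessors of $J$ in $\widetilde{\Gamma}$ are either external, in which case the terms reduce via $\Gamma(\bar{\sigma})_{v^*}\leq\bar{\sigma}_{v^*}=\tilde{\sigma}_J$, or correspond to the self-term $\widetilde{\gamma}_{J,v^*}\circ\bar{\sigma}_{v^*}$, whose expansion by \eqref{gain_subgraph} is a maximum over cycle compositions $\gamma_{v^*,k_1}\circ\dots\circ\gamma_{k_{r-1},v^*}\circ\bar{\sigma}_{v^*}$. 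Iteratively applying $\Gamma(\bar{\sigma})_{k_i}\leq\bar{\sigma}_{k_i}$ at each stage of such a cycle makes the composition telescope down to $\bar{\sigma}_{v^*}=\tilde{\sigma}_J$.

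The main obstacle is the combinatorial bookkeeping around the minimality constraint in \eqref{path_subgraph} and \eqref{gain_subgraph}: when a composition arising in the verification traverses a vertex twice, it corresponds to a non-minimal path or cycle that is not literally one of the terms in the defining max. Handling this cleanly requires invoking the small gain condition on the strongly connected induced subgraph $\V_J\cup\{v^*\}$ (which is implicit in the $\Omega$-path hypothesis via Theorem~\ref{theorem_subgraph}) to argue that non-minimal compositions are dominated by their minimal sub-path counterparts. This is the one place where strong connectivity of the subgraph is genuinely used; the rest of the argument is symbolic manipulation entirely parallel to the proofs of Propositions~\ref{proposition_red_seq} and~\ref{proposition_red_par}.
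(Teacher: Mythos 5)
Your proposal follows the same componentwise verification strategy as the paper's proof, and is in fact considerably more careful than what appears in print. The paper's argument for part~(i) only checks $w\in\V_J$, writes the key chain as a string of equalities, ignores the in-edge from $v^*$ into a $w\in\V_J$, and asserts that the case $w=v^*$ ``holds straightforwardly.'' You are right that none of these are immediate: the $w=v^*$ case genuinely needs the observation that $\gamma_{v^*,j}\circ\hat\gamma_{j,J}$ unfolds into cycle gains dominated by $\widetilde\gamma_{J,v^*}$ (via \eqref{gain_subgraph} and then $\widetilde\Gamma(\widetilde\sigma)_J\leq\widetilde\sigma_J$), and the step from $\max_j\gamma_{w,j}\circ\hat\gamma_{j,J}$ to $\hat\gamma_{w,J}$ requires excising repeated subpaths and invoking the cycle condition to show a non-minimal composition is dominated by the corresponding minimal one. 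You identified both points explicitly; the paper merely cites the cycle condition in passing and writes equalities where inequalities are meant. Your part~(ii) argument via telescoping is equivalent in spirit to the paper's (which directly observes $\widetilde\gamma_{J,v^*}<\id$); the only slip is that $J$ has no external predecessors in the reduced graph besides $v^*$ (by \eqref{gain_subgraph3} the only nonzero gain into $J$ is $\widetilde\gamma_{J,v^*}$), so your case split there is vacuous on one side --- harmless but worth noting.

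One point that you gloss over with ``the trivial length-one path from $v^*$ to $w$ already appears in $\hat\gamma_{w,J}$'' is in fact a genuine gap shared with the paper's own proof: for $w\in\V_J$ with $v^*\in P(w)$, the contribution is $\gamma_{w,v^*}\circ\bar\sigma_{v^*}=\gamma_{w,v^*}\circ\widetilde\sigma_{v^*}$, whereas $\bar\sigma_w=\hat\gamma_{w,J}\circ\widetilde\sigma_J$ with the $\Omega$-path hypothesis giving only $\widetilde\sigma_J\leq\widetilde\sigma_{v^*}$, in the wrong direction. Having $\gamma_{w,v^*}\leq\hat\gamma_{w,J}$ does not close the bound unless $\widetilde\sigma_J=\widetilde\sigma_{v^*}$. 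The construction \eqref{omega_subgraph} should arguably read $\hat\gamma_{w,J}\circ\widetilde\sigma_{v^*}$ rather than $\hat\gamma_{w,J}\circ\widetilde\sigma_J$; with that change the $v^*\to w$ edge is handled by monotonicity, the $j\in\V_J$ predecessors by your cycle-excision argument, and the $v^*$ component by $\widetilde\gamma_{J,v^*}\circ\widetilde\sigma_{v^*}\leq\widetilde\sigma_J\leq\widetilde\sigma_{v^*}$. This is a defect of the paper's statement, not of your understanding of the proof technique, but your outline should not present the length-one path observation as if it settles the matter.
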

\begin{proof}\quad\\
{\it Proof of (ii)}:\\
We assume that an $\Omega$-path $\widetilde{\sigma}$ for the small gain matrix $\widetilde{\sigma}$ is known. In particular, by \eqref{sigma cond 2'}
 $\widetilde{\Gamma}(\widetilde{\sigma})\leq\widetilde{\sigma}$ holds.
Let us check whether an $\Omega$-path $\bar{\sigma}$ defined in \eqref{omega_subgraph} is an  $\Omega$-path for the large matrix $\Gamma$. To this end we need to check \eqref{sigma cond 2'}.

For the components $\Gamma(\bar{\sigma})_w$, $w\not\in\{v_1,\dots,v_k\}$ the inequality \eqref{sigma cond 2'} holds straightforwardly.  
Consider now  $\Gamma(\bar{\sigma})_w$, $w\in\{v_1,\dots,v_k\}$. From Theorem~\eqref{theorem_subgraph} the cycle condition \eqref{cycle} holds for all $v_1, \ldots, v_k$. Then applying \eqref{cycle}, \eqref{omega_subgraph} and \eqref{path_subgraph} we obtain
\begin{equation}\label{proof_omega_subgraph}
\begin{array}{lll}
\Gamma(\bar{\sigma})_w&=&\max\{\gamma_{w,v_{k_1}}\circ\bar{\sigma}_{k_1},\ldots,\gamma_{w,v_{k_r}}\circ\bar{\sigma}_{k_r}\}\\
&=&\max\{\gamma_{w,v_{k_1}}\circ \hat{\gamma}_{v_{k_1},J}\circ\widetilde{\sigma}_J,\ldots,\gamma_{w,v_{k_r}}\circ \hat{\gamma}_{v_{k_r},J}\circ\widetilde{\sigma}_J\}\\
&=&\hat{\gamma}_{w,J}\circ\widetilde{\sigma}_J\\
&=&\bar{\sigma}_{w}
\end{array}
\end{equation}
Thus \eqref{sigma cond 2'} holds for any $w\in\{v_1,\ldots,v_k\}$ and thus $\bar{\sigma}$ is an $\Omega$-path corresponding to the large gain matrix $\Gamma$.\\
{\it Proof of (ii)}:\\
Assume now that an $\Omega$-path $\bar{\sigma}$ for the large gain matrix $\Gamma$ is known. Let us check whether $\Omega$-path $\tilde{\sigma}$ defined in \eqref{omega_subgraph_2} is an  $\Omega$-path for the small gain matrix $\tilde\Gamma$. To this end we need to check \eqref{sigma cond 2'} for $\tilde{\sigma}$.
 
For the components $\tilde\Gamma(\tilde{\sigma})_w$, $w\not\in\{v^*,J\}$ the inequality  \eqref{sigma cond 2'} holds straightforwardly.  Consider now  $\tilde\Gamma(\tilde{\sigma})_{v^*}$ and $\tilde\Gamma(\tilde{\sigma})_{J}$ . Applying \eqref{cycle}, \eqref{gain_subgraph}, \eqref{omega_subgraph_2} and \eqref{sigma cond 2'} we obtain:
\begin{eqnarray*}
\tilde\Gamma(\tilde{\sigma})_{v^*}&=&\max\{\tilde\gamma_{v^*,1}\circ\tilde\sigma_{1},\ldots,\tilde\gamma_{v^*,J}\circ\tilde\sigma_J,\ldots,\tilde\gamma_{v',n}\circ\tilde{\sigma}_n\}\\
&=&\max\{\tilde\gamma_{v^*,1}\circ\tilde\sigma_{1},\ldots,\tilde\sigma_J,\ldots,\tilde\gamma_{v',n}\circ\tilde{\sigma}_n\}\\
&=&\max\{\tilde\gamma_{v^*,1}\circ\tilde\sigma_{1},\ldots,\tilde{\sigma}_{v^*},\ldots,\tilde\gamma_{v',n}\circ\tilde{\sigma}_n\}\\
&\leq&\tilde{\sigma}_{v^*}
\end{eqnarray*}
and
\begin{eqnarray*}
\tilde\Gamma(\tilde{\sigma})_{J}&=&\tilde\gamma_{J,v^*}\circ\tilde\sigma_{v^*}\\
&=&\max_{ (k_1,...,k_r) \in \{v_1,...,v_k,v^*\}^r, k_1=k_r}\{\gamma_{k_1,k_2} \circ \gamma_{k_2,k_3} \circ \dots \circ \gamma_{k_{r-1},k_r}\}\circ\tilde\sigma_{v^*}\\
&\leq&\tilde\sigma_{v^*}\\
&=&\tilde\sigma_{J}
\end{eqnarray*}
Thus  $\tilde\Gamma(\tilde{\sigma})\leq\tilde{\sigma}$ and $\tilde{\sigma}$ is an $\Omega$-path corresponding to the small gain matrix $\tilde\Gamma$.
\end{proof}
\begin{corollary}
Consider a system of the form \eqref{ws} that is an interconnection of the subsystems
\eqref{is}. Assume that each subsystem {\it i} of \eqref{is} has
an ISS-Lyapunov function $V_i$ with the corresponding ISS-Lyapunov
gains $\gamma_{ij}, \gamma_i, i,j=1,\ldots,n$ as in \eqref{iss_lyap}. If \eqref{sgc_max} holds for $\tilde \sigma$ defined by \eqref{gain_subgraph}-\eqref{gain_subgraph3}, then the system \eqref{ws} has an ISS-Lyapunov function and an ISS-Lyapunov function is given by \eqref{Lyapunov_function} with $\sigma$ from \eqref{omega_subgraph}.
\end{corollary}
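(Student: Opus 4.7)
The plan is to mimic the structure of the two earlier corollaries (for sequential and parallel motifs) and obtain the result as a direct chaining of Theorem~\ref{theorem_subgraph}, Proposition~\ref{proposition_red_subgraph}(i) and the Lyapunov small gain Theorem~\ref{thm:iss_lyap}. No new estimate on trajectories is needed; the work has been done upstream.

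First, I would use Theorem~\ref{theorem_subgraph} to transfer the assumption. By hypothesis, the reduced gain matrix $\widetilde{\Gamma}$ built from \eqref{gain_subgraph}--\eqref{gain_subgraph3} satisfies $\widetilde{\Gamma}(s)\not\geq s$ for all $s\in\R_+^{n-k+1}\setminus\{0\}$. Theorem~\ref{theorem_subgraph} then gives the equivalent statement $\Gamma(s)\not\geq s$ for the original matrix $\Gamma$ on $\R_+^n\setminus\{0\}$. Thus the hypotheses of Theorem~\ref{thm:iss_lyap} concerning the small gain condition are satisfied for the full interconnection.

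Next, I would produce an $\Omega$-path for $\Gamma$. Since $\widetilde\Gamma$ satisfies the small gain condition, Remark~\ref{omega_max} (applied to $\widetilde{\Gamma}$) furnishes an $\Omega$-path $\widetilde{\sigma}$ with respect to $\widetilde{\Gamma}$. Proposition~\ref{proposition_red_subgraph}(i) then lifts $\widetilde{\sigma}$ to a path $\bar{\sigma}$ defined on the full index set via formula \eqref{omega_subgraph}, and establishes condition \eqref{sigma cond 2'}, namely $\Gamma(\bar{\sigma})\leq\bar{\sigma}$. The regularity requirements \eqref{sigma cond 1} for $\bar\sigma$ follow because on indices $w\notin\V_J$ we have $\bar\sigma_w=\widetilde\sigma_w$, while on $w\in\V_J$ the path $\bar\sigma_w=\hat\gamma_{w,J}\circ\widetilde\sigma_J$ is a maximum over finitely many compositions of $\K_\infty$ functions, and $\hat\gamma_{w,J}\in\K_\infty$ is locally bi-Lipschitz away from zero as a maximum of such compositions — inheriting the required bounded derivative property on compact subsets of $(0,\infty)$ from $\widetilde\sigma_J$.

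Finally, I would apply Theorem~\ref{thm:iss_lyap} to the original interconnected system \eqref{ws} using the ISS-Lyapunov functions $V_i$, the gains $\gamma_{ij}$ assembled in $\Gamma$ via \eqref{operator_gamma}, and the $\Omega$-path $\bar\sigma$ just constructed. The theorem asserts ISS of \eqref{ws} and delivers an explicit ISS-Lyapunov function $V(x)=\max_{i=1,\ldots,n}\bar\sigma_i^{-1}(V_i(x_i))$, which is exactly the claim with $\sigma$ replaced by the path from \eqref{omega_subgraph}.

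The only non-trivial verification is the regularity check for $\bar\sigma$ on the indices in $\V_J$, i.e.\ the local Lipschitz property of $\bar\sigma_w^{-1}$ and the derivative bounds \eqref{sigma cond 1}. This is where care is needed because $\hat\gamma_{w,J}$ is defined as a maximum of compositions; one has to argue that on each compact $P\subset(0,\infty)$ only finitely many branches of the maximum are active and each branch is locally bi-Lipschitz. Apart from this routine but slightly technical point, the proof is simply a three-line assembly of the previously stated results, and in the compressed style of the paper it would read: \emph{the assertion follows from Theorem~\ref{thm:iss_lyap}, Theorem~\ref{theorem_subgraph} and Proposition~\ref{proposition_red_subgraph}}.
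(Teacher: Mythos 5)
Your proposal is correct and takes essentially the same approach as the paper, whose own proof is the one-liner ``The assertion follows from Theorem~\ref{thm:iss_lyap} and Proposition~\ref{proposition_red_subgraph}''; you simply unpack that citation into the small-gain transfer (Theorem~\ref{theorem_subgraph}), the existence of $\widetilde\sigma$ via Remark~\ref{omega_max}, the lift to $\bar\sigma$ via \eqref{omega_subgraph}, and the final application of Theorem~\ref{thm:iss_lyap}. One remark: the regularity issue you flag at the end is real --- Proposition~\ref{proposition_red_subgraph} only verifies \eqref{sigma cond 2'} and is silent on conditions (1)--(2) of the $\Omega$-path definition, and your claim that $\hat\gamma_{w,J}$ is ``locally bi-Lipschitz away from zero as a maximum of compositions of $\K_\infty$ functions'' needs the gains themselves (not just their class-$\K_\infty$ membership) to have this regularity --- but this gap is present in the paper's argument as well, and you do correctly identify it as the place where care is required rather than glossing over it.
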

\begin{proof}
The assertion follows from Theorem~\ref{thm:iss_lyap} and Proposition~\ref{proposition_red_subgraph}.
\end{proof}

\section{Application of aggregation rules}\label{sec:Example} 

The properties of aggregation rules can be summarized in the following corollary.
\begin{corollary}\label{thm:main}
Consider interconnected system \eqref{ws} and assume that all subsystems in \eqref{is} are ISS with gains as in \eqref{is_iss}. Let gain matrix $\tilde\Gamma$ is obtained by step-by-step application of aggregation rules for parallel, sequential and almost disconnected subgraph in any order. 
If condition \eqref{sgc_max} holds for matrix $\tilde\Gamma$, then the system \eqref{ws} is ISS.
\end{corollary}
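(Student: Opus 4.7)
The plan is to argue by a straightforward induction on the number of aggregation steps. Let $\Gamma^{(0)} := \Gamma$ denote the original gain matrix, and let $\Gamma^{(0)}, \Gamma^{(1)}, \ldots, \Gamma^{(N)} = \tilde\Gamma$ be the sequence of gain matrices obtained by successively applying one of the three aggregation rules (sequential, parallel, or almost disconnected subgraph) at each step. The central observation is that Theorems~\ref{theorem_seq}, \ref{theorem_par} and \ref{theorem_subgraph} each establish a genuine \emph{equivalence}: the small gain condition \eqref{sgc_max} holds for $\Gamma^{(j)}$ if and only if it holds for $\Gamma^{(j+1)}$, no matter which of the three rules is selected at step $j+1$.

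Iterating this equivalence $N$ times shows that $\tilde\Gamma(s)\not\geq s$ for all $s\in\R_+^{n-k}\setminus\{0\}$ is equivalent to $\Gamma(s)\not\geq s$ for all $s\in\R_+^n\setminus\{0\}$. By hypothesis each subsystem of \eqref{is} is ISS with gains as in \eqref{is_iss}, so Theorem~\ref{thm:iss} applied to the original matrix $\Gamma$ yields ISS of the interconnected system \eqref{ws}. The argument is independent of the order in which the rules are used, precisely because each equivalence is proved individually in the motif-specific theorems cited above.

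The only point that requires brief verification is that each intermediate matrix $\Gamma^{(j)}$ is again a valid gain matrix with entries in $\Kinf\cup\{0\}$ and the structure needed to apply a subsequent rule. This is immediate from the explicit formulas \eqref{gain_ser}--\eqref{gain_ser3}, \eqref{gain_parallel}--\eqref{gain_parallel3} and \eqref{gain_subgraph}--\eqref{gain_subgraph3}: the aggregated gains are obtained as finite maxima of finite compositions of functions in $\Kinf\cup\{0\}$, hence remain in this class, while rows and columns untouched by the rule are simply copied from $\Gamma^{(j)}$ into $\Gamma^{(j+1)}$. In particular, after any step one may freely search the reduced graph for a new sequential, parallel, or almost disconnected motif and re-apply the appropriate rule.

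Since Corollaries~\ref{cor:seq}, \ref{cor:par} and \ref{cor:subgraph} are already stated for a single aggregation step, the present corollary is really a bookkeeping statement over the chain $\Gamma \to \Gamma^{(1)} \to \cdots \to \tilde\Gamma$, and there is no genuine obstacle in the proof; all non-trivial work has been carried out in the three motif-specific theorems, and the argument is completed by a one-line appeal to Theorem~\ref{thm:iss}.
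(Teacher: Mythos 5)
Your proof is correct and takes essentially the same route as the paper, which simply cites Corollaries~\ref{cor:seq}, \ref{cor:par} and \ref{cor:subgraph} in a single line. You are in fact slightly more careful than the paper: you chain the genuine \emph{equivalences} from Theorems~\ref{theorem_seq}, \ref{theorem_par}, \ref{theorem_subgraph} (rather than the one-step corollaries, which do not literally compose), and you explicitly verify that each intermediate $\Gamma^{(j)}$ remains a valid gain matrix with entries in $\Kinf\cup\{0\}$, which is the detail needed to make the induction go through before a final appeal to Theorem~\ref{thm:iss}.
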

\begin{proof}
The proof follows from Corollary~\ref{cor:seq}, Corollary~\ref{cor:par},  and Corollary~\ref{cor:subgraph}.
\end{proof}

Let us apply the obtained reduction rules on the following example with the network of 30 nodes, see Figure~\ref{figure:example_large}.  We assume that all the subsystems are ISS with the following gains $\gamma_{ij}$: $\gamma_{3,1}(t)=\frac{5}{6}t^2$, $\gamma_{1,2}(t)=2t$, $\gamma_{2,13}(t)=\frac{1}{5}\sqrt{t}$, $\gamma_{13,30}(t)=t$, $\gamma_{30,29}(t)=4t^2$, $\gamma_{29,28}(t)=\sqrt{t}$, $\gamma_{29,23}(t)=\frac{3}{10}t^2$, $\gamma_{23,17}(t)=\sqrt{t}$, $\gamma_{17,12}(t)=3t$, $\gamma_{12,7}(t)=t^2$, $\gamma_{7,4}(t)=\sqrt{t}$, $\gamma_{3,4}(t)=\sqrt{t}$, $\gamma_{4,3}(t)=\frac{4}{5}t^2$, $\gamma_{4,6}(t)=t$, $\gamma_{6,3}(t)=\sqrt{t}$, $\gamma_{15,11}(t)=t^2$, $\gamma_{11,16}(t)=\frac{2}{3}t$, $\gamma_{16,11}(t)=t$, $\gamma_{16,15}(t)=\sqrt{t}$, $\gamma_{15,16}(t)=t^2$, $\gamma_{19,15}(t)=\sqrt{t}$, $\gamma_{20,15}(t)=2\sqrt{t}$, $\gamma_{26,19,}(t)=\frac{1}{3}t$, $\gamma_{26,20}(t)=\frac{1}{4}t$, $\gamma_{22,26}(t)=\frac{1}{9}t^2$, $\gamma_{21,22}(t)=2\sqrt{t}$, $\gamma_{22,21}(t)=\frac{1}{4}t^2$, $\gamma_{16,22}(t)=3\sqrt{t}$, 
$\gamma_{5,3}(t)=\frac{1}{2}t ,\gamma_{14,8}(t)=\frac{1}{8}t^2, \gamma_{8,5}(t)=\sqrt{t}, \gamma_{14,9}(t)=\frac{13}{16}t^2, \gamma_{9,5}(t)=2\sqrt{t}, \gamma_{14,10}(t)=t^2, \gamma_{10,5}(t)=\frac{1}{2}\sqrt{t}, \gamma_{28,27}(t)=\frac{1}{3}t^2, \gamma_{27,25}(t)=\sqrt{t}, \gamma_{28,24}(t)=\frac{1}{2}t, \gamma_{24,18}(t)=t^2, \gamma_{25,18}=t^2, \gamma_{18,14}(t)=\sqrt{t}$. 

To establish ISS of this network we can apply Theorem~\ref{thm:iss}. To this end we need to verify small gain condition \eqref{sgc_max} or equivalent cycle condition \eqref{cycle}.  
The network in Figure~\ref{figure:example_large} has 29 minimal cycles, i.e. cycles with $k_i\neq k_j$ for $i\neq j$ other than for $k_0=k_j$. Note that already at this stage it is rather difficult to identify all the minimal cycles. To this end we can use a numerical algorithm, see for example \cite{Jon75} and \cite{Tar72}. However, finally we will need any way to verify the cycle conditions analytically. To apply Theorem~\ref{thm:iss} we need to check 29 cycle conditions. 

\begin{figure*}[t]
\begin{minipage}{0.47\linewidth}
\centering
\begin{tikzpicture}[scale=0.25] 
\begin{scope}[>=latex]
\filldraw[fill=grau,line width=0.25pt] (12,24) circle (20pt) node (X1) {\tiny $\pmb{1}$};

\filldraw[fill=grau,line width=0.25pt] (21,24) circle (20pt) node (X2) {\tiny$\pmb{2}$};

\filldraw[fill=grau,line width=0.25pt] (8,21) circle (20pt) node (X3){\tiny$\pmb{3}$};

\filldraw[fill=grau,line width=0.25pt] (18,21) circle (20pt) node (X4){\tiny$\pmb{4}$};

\filldraw[fill=grau,line width=0.25pt] (3,18) circle (20pt) node (X5){\tiny$\pmb{5}$};

\filldraw[fill=grau,line width=0.25pt] (12,18) circle (20pt) node (X6){\tiny$\pmb{6}$};

\filldraw[fill=grau,line width=0.25pt] (21,18) circle (20pt) node (X7){\tiny$\pmb{7}$};

\filldraw[fill=grau,line width=0.25pt] (0.5,15) circle (20pt) node (X8){\tiny$\pmb{8}$};

\filldraw[fill=grau,line width=0.25pt] (3,15) circle (20pt) node (X9){\tiny$\pmb{9}$};

\filldraw[fill=grau,line width=0.25pt] (5.5,15) circle (20pt) node (X10){\tiny$\pmb{10}$};

\filldraw[fill=grau,line width=0.25pt] (12,15) circle (20pt) node (X11){\tiny$\pmb{11}$};

\filldraw[fill=grau,line width=0.25pt] (21,15) circle (20pt) node (X12){\tiny$\pmb{12}$};

\filldraw[fill=grau,line width=0.25pt] (25,12) circle (20pt) node (X13){\tiny$\pmb{13}$};

\filldraw[fill=grau,line width=0.25pt] (3,12) circle (20pt) node (X14){\tiny$\pmb{14}$};

\filldraw[fill=grau,line width=0.25pt] (12,12) circle (20pt) node (X15){\tiny$\pmb{15}$};

\filldraw[fill=grau,line width=0.25pt] (18,12) circle (20pt) node (X16){\tiny$\pmb{16}$};

\filldraw[fill=grau,line width=0.25pt] (21,12) circle (20pt) node (X17){\tiny$\pmb{17}$};

\filldraw[fill=grau,line width=0.25pt] (3,9) circle (20pt) node (X18){\tiny$\pmb{18}$};

\filldraw[fill=grau,line width=0.25pt] (8,9) circle (20pt) node (X19){\tiny$\pmb{19}$};

\filldraw[fill=grau,line width=0.25pt] (12,9) circle (20pt) node (X20){\tiny$\pmb{20}$};

\filldraw[fill=grau,line width=0.25pt] (15,9) circle (20pt) node (X21){\tiny$\pmb{21}$};

\filldraw[fill=grau,line width=0.25pt] (18,9) circle (20pt) node (X22){\tiny$\pmb{22}$};

\filldraw[fill=grau,line width=0.25pt] (21,9) circle (20pt) node (X23){\tiny$\pmb{23}$};

\filldraw[fill=grau,line width=0.25pt] (0.5,6) circle (20pt) node (X24){\tiny$\pmb{24}$};

\filldraw[fill=grau,line width=0.25pt] (5.5,6) circle (20pt) node (X25){\tiny$\pmb{25}$};

\filldraw[fill=grau,line width=0.25pt] (12,6) circle (20pt) node (X26){\tiny$\pmb{26}$};

\filldraw[fill=grau,line width=0.25pt] (5.5,3) circle (20pt) node (X27){\tiny$\pmb{27}$};

\filldraw[fill=grau,line width=0.25pt] (3,0) circle (20pt) node (X28){\tiny$\pmb{28}$};

\filldraw[fill=grau,line width=0.25pt] (12,0) circle (20pt) node (X29){\tiny$\pmb{29}$};

\filldraw[fill=grau,line width=0.25pt] (21,0) circle (20pt) node (X30){\tiny$\pmb{30}$};
%
\draw[->,line width=0.25pt] (X1) -- (X3);
\draw[->,line width=0.25pt] (X1) -- (X4);
\draw[->,line width=0.25pt] (X2) -- (X1);
\draw[->,line width=0.25pt] (X13) -- (X2);
\draw[->,line width=0.25pt] (X3) -- (X5);
\draw[->,line width=0.25pt] (X3) -- (X6);
\draw[->,line width=0.25pt] (X6) -- (X4);
\draw[->,line width=0.25pt] (X4) -- (X7);
\draw[->,line width=0.25pt] (X7) -- (X12);
\draw[->,line width=0.25pt] (X12) -- (X17);
\draw[->,line width=0.25pt] (X17) -- (X23);
\draw[->,line width=0.25pt] (X23) -- (X29);
\draw[->,line width=0.25pt] (12.15,17.2) -- (12.15,15.8);
\draw[->,line width=0.25pt] (11.85,15.8) -- (11.85,17.2);
\draw[->,line width=0.25pt] (X11) -- (X15);
\draw[->,line width=0.25pt] (12.8,14.85) -- (17.4,12.55);
\draw[->,line width=0.25pt] (17.25,12.25) -- (12.7,14.55);
\draw[->,line width=0.25pt] (12.8,12.15) -- (17.2,12.15);
\draw[->,line width=0.25pt] (17.2,11.85) -- (12.8,11.85);
\draw[->,line width=0.25pt] (X15) -- (X19);
\draw[->,line width=0.25pt] (X15) -- (X20);
\draw[->,line width=0.25pt] (X20) -- (X26);
\draw[->,line width=0.25pt] (X19) -- (X26);
\draw[->,line width=0.25pt] (X26) -- (X22);
\draw[->,line width=0.25pt] (X22) -- (X16);
\draw[->,line width=0.25pt] (15.8,9.15) -- (17.2,9.15);
\draw[->,line width=0.25pt] (17.2,8.85) -- (15.8,8.85);
\draw[->,line width=0.25pt] (8.8,21.15) -- (17.2,21.15);
\draw[->,line width=0.25pt] (17.2,20.85) -- (8.8,20.85);
\draw[->,line width=0.25pt] (X5) -- (X8);
\draw[->,line width=0.25pt] (X5) -- (X10);
\draw[->,line width=0.25pt] (X5) -- (X9) ;
\draw[->,line width=0.25pt] (X8) -- (X14);
\draw[->,line width=0.25pt] (X9) -- (X14);
\draw[->,line width=0.25pt] (X10) -- (X14);
\draw[->,line width=0.25pt] (X14) -- (X18);
\draw[->,line width=0.25pt] (X18) -- (X24);
\draw[->,line width=0.25pt] (X18) -- (X25);
\draw[->,line width=0.25pt] (X25) -- (X27);
\draw[->,line width=0.25pt] (X27) -- (X28);
\draw[->,line width=0.25pt] (X24) -- (X28);
\draw[->,line width=0.25pt] (X28) -- (X29);
\draw[->,line width=0.25pt] (X29) -- (X30);
\draw[->,line width=0.25pt] (X30) -- (X13);
\draw[->,line width=0.25pt] (X28) to[out=170,in=155] (X3);
\end{scope}
\end{tikzpicture}
\caption{Network of 30 nodes}
\label{figure:example_large}
\end{minipage}
\begin{minipage}{0.47\linewidth}
\centering
\begin{tikzpicture}[scale=0.25] 
\begin{scope}[>=latex]
\filldraw[fill=grau,line width=0.25pt] (8,21) circle (20pt) node (X3){\tiny$\pmb{3}$};
\filldraw[fill=grau,line width=0.25pt] (3,18) circle (20pt) node (X5){\tiny$\pmb{5}$};
\filldraw[fill=grau,line width=0.25pt] (0.5,15) circle (20pt) node (X8){\tiny$\pmb{8}$};
\filldraw[fill=grau,line width=0.25pt] (3,15) circle (20pt) node (X9){\tiny$\pmb{9}$};
\filldraw[fill=grau,line width=0.25pt] (5.5,15) circle (20pt) node (X10){\tiny$\pmb{10}$};
\filldraw[fill=grau,line width=0.25pt] (3,12) circle (20pt) node (X14){\tiny$\pmb{14}$};
\filldraw[fill=grau,line width=0.25pt] (3,9) circle (20pt) node (X18){\tiny$\pmb{18}$};
\filldraw[fill=grau,line width=0.25pt] (0.5,6) circle (20pt) node (X24){\tiny$\pmb{24}$};
\filldraw[fill=grau,line width=0.25pt] (5.5,6) circle (20pt) node (X25){\tiny$\pmb{25}$};
\filldraw[fill=grau,line width=0.25pt] (5.5,3) circle (20pt) node (X27){\tiny$\pmb{27}$};
\filldraw[fill=grau,line width=0.25pt] (3,0) circle (20pt) node (X28){\tiny$\pmb{28}$};
\draw[->,line width=0.25pt] (X3) -- (X5)node[pos=0.5,right,color=black]{\tiny$\frac{1}{2}t$};
\draw[->,line width=0.25pt] (X5) -- (X8) node[pos=0.5,left,color=black]{\tiny$\sqrt{t}$};
\draw[->,line width=0.25pt] (X5) -- (X10)node[pos=-0.05,right,color=black]{\tiny$\frac{1}{2}\sqrt{t}$};
\draw[->,line width=0.25pt] (X5) -- (X9) node[pos=0.75,right,color=black]{\tiny$2\sqrt{t}$};
\draw[->,line width=0.25pt] (X8) -- (X14)node[pos=0.4,left,color=black]{\tiny$\frac{1}{8}t^2$};
\draw[->,line width=0.25pt] (X9) -- (X14)node[pos=0.9,left,color=black]{\tiny$\frac{13}{16}t^2$};
\draw[->,line width=0.25pt] (X10) -- (X14)node[pos=0.5,right,color=black]{\tiny$t^2$};
\draw[->,line width=0.25pt] (X14) -- (X18)node[pos=0.5,right,color=black]{\tiny$\sqrt{t}$};
\draw[->,line width=0.25pt] (X18) -- (X24)node[pos=0.25,left,color=black]{\tiny$t^2$};
\draw[->,line width=0.25pt] (X18) -- (X25)node[pos=0.25,right,color=black]{\tiny$t^2$};
\draw[->,line width=0.25pt] (X25) -- (X27)node[pos=0.5,right,color=black]{\tiny$\sqrt{t}$};
\draw[->,line width=0.25pt] (X27) -- (X28)node[pos=0.5,right,color=black]{\tiny$\frac{1}{3}t^2$};
\draw[->,line width=0.25pt] (X24) -- (X28)node[pos=0.5,left,color=black]{\tiny$\frac{1}{2}t$};
\end{scope}
\end{tikzpicture}
\caption{Subgraph.}
\label{example_large_subraph}
\end{minipage}
\vspace{-0.5cm}
\end{figure*}

The longest minimal cycle consists of 14 nodes.

Consider the sub-graph with nodes 3, 5, 8, 9, 10, 14, 18, 24, 25, 27 and 28, see Figure~\ref{example_large_subraph}.

This subgraph can be aggregated in several steps:
\begin{enumerate}

\item Aggregation of nodes connected in parallel:\\
$\tilde\gamma_{14,5}=\max\{\gamma_{14,8}\circ\gamma_{8,5},\gamma_{14,9}\circ\gamma_{9,5},\gamma_{14,10}\circ\gamma_{10,5}\}$

$=\max\{\frac{1}{2}(\sqrt(t))^2,\frac{13}{16}(2\sqrt(t))^2,(\frac{1}{2}\sqrt(t))^2\}=\frac{13}{4}t$

\item Aggregation of sequentially connected nodes:\\
$\tilde\gamma_{28,25}=\gamma_{28,27}\circ\gamma_{27,25}=\frac{1}{3}(\sqrt(t))^2=\frac{1}{3}t$

\item Aggregation of nodes connected in parallel:\\
$\tilde\gamma_{28,18}
=\max\{\gamma_{28,24}\circ\gamma_{24,18},\gamma_{28,25}\circ\gamma_{25,18}\}=\max\{\frac{1}{2}t^2,\frac{1}{3}t^2\}=\frac{1}{2}t^2$

\item Aggregation of sequentially connected nodes:\\
$\tilde\gamma_{28,3}
=\gamma_{28,18}\circ\gamma_{18,14}\circ\gamma_{14,5}\circ\gamma_{5,3}
=\frac{1}{2}(\sqrt{\frac{13}{4}\frac{1}{2}t})^2=\frac{13}{16}t$.

\end{enumerate}
Thus we obtain the graph with 21 nodes, see Figure~\ref{Figure:21 nodes}.

Let us apply aggregation rules to the rest of the graph in the following order: 
\begin{enumerate}
\item aggregation of almost disconnected subgraph with nodes 11, 15-16, 19-22 and 26;
\item aggregation of sequentially connected nodes 7, 12, 17, 23;
\item  aggregation of sequentially connected nodes 2, 13, 30
\end{enumerate}
Thus we obtain the graph with 7 nodes, see Figure~\ref{Figure:7 nodes}. The gains of the aggregated network are as follows:

$\tilde\gamma_{28,3}(t):=\frac{13}{16}t$ $\tilde\gamma_{3,1}(t):=\frac{5}{6}t^2$, $\tilde\gamma_{4,1}(t):=t$, $\tilde\gamma_{3,4}(t):=\frac{4}{5}t^2$, $\tilde\gamma_{4,3}(t):=\sqrt{t}$, $\tilde\gamma_{6,3}(t):=\sqrt{t}$, $\tilde\gamma_{4,6}(t):=t$, $\tilde\gamma_{6,11}(t):=t$, $\tilde\gamma_{11,6}(t):=\frac{2}{3}t$, $\tilde\gamma_{3,28}(t):=\frac{2}{3}t$, $\tilde\gamma_{29,28}(t):=\sqrt{t}$, $\tilde\gamma_{1,29}(t):=\frac{4}{5}t$, $\tilde\gamma_{29,4}(t):=\frac{9}{10}t$.

\begin{figure*}[t]
\begin{minipage}{0.4\linewidth}
\centering
\begin{tikzpicture}[scale=0.25] 
\begin{scope}[>=latex]
\filldraw[fill=grau,line width=0.25pt] (12,24) circle (20pt) node (X1) {\tiny $\pmb{1}$};
\filldraw[fill=grau,line width=0.25pt] (21,24) circle (20pt) node (X2) {\tiny$\pmb{2}$};
\filldraw[fill=grau,line width=0.25pt] (8,21) circle (20pt) node (X3){\tiny$\pmb{3}$};
\filldraw[fill=grau,line width=0.25pt] (18,21) circle (20pt) node (X4){\tiny$\pmb{4}$};
\filldraw[fill=grau,line width=0.25pt] (12,18) circle (20pt) node (X6){\tiny$\pmb{6}$};
\filldraw[fill=grau,line width=0.25pt] (21,18) circle (20pt) node (X7){\tiny$\pmb{7}$};
\filldraw[fill=grau,line width=0.25pt] (12,15) circle (20pt) node (X11){\tiny$\pmb{11}$};
\filldraw[fill=grau,line width=0.25pt] (21,15) circle (20pt) node (X12){\tiny$\pmb{12}$};
\filldraw[fill=grau,line width=0.25pt] (25,12) circle (20pt) node (X13){\tiny$\pmb{13}$};
\filldraw[fill=grau,line width=0.25pt] (12,12) circle (20pt) node (X15){\tiny$\pmb{15}$};
\filldraw[fill=grau,line width=0.25pt] (18,12) circle (20pt) node (X16){\tiny$\pmb{16}$};
\filldraw[fill=grau,line width=0.25pt] (21,12) circle (20pt) node (X17){\tiny$\pmb{17}$};
\filldraw[fill=grau,line width=0.25pt] (8,9) circle (20pt) node (X19){\tiny$\pmb{19}$};
\filldraw[fill=grau,line width=0.25pt] (12,9) circle (20pt) node (X20){\tiny$\pmb{20}$};
\filldraw[fill=grau,line width=0.25pt] (15,9) circle (20pt) node (X21){\tiny$\pmb{21}$};
\filldraw[fill=grau,line width=0.25pt] (18,9) circle (20pt) node (X22){\tiny$\pmb{22}$};
\filldraw[fill=grau,line width=0.25pt] (21,9) circle (20pt) node (X23){\tiny$\pmb{23}$};
\filldraw[fill=grau,line width=0.25pt] (12,6) circle (20pt) node (X26){\tiny$\pmb{26}$};
\filldraw[fill=grau,line width=0.25pt] (3,0) circle (20pt) node (X28){\tiny$\pmb{28}$};
\filldraw[fill=grau,line width=0.25pt] (12,0) circle (20pt) node (X29){\tiny$\pmb{29}$};
\filldraw[fill=grau,line width=0.25pt] (21,0) circle (20pt) node (X30){\tiny$\pmb{30}$};
\draw[->,line width=0.25pt] (X1) -- (X3);
\draw[->,line width=0.25pt] (X1) -- (X4);
\draw[->,line width=0.25pt] (X2) -- (X1);
\draw[->,line width=0.25pt] (X13) -- (X2);
\draw[->,line width=0.25pt] (X3) -- (X6);
\draw[->,line width=0.25pt] (X6) -- (X4);
\draw[->,line width=0.25pt] (X4) -- (X7);
\draw[->,line width=0.25pt] (X7) -- (X12);
\draw[->,line width=0.25pt] (X12) -- (X17);
\draw[->,line width=0.25pt] (X17) -- (X23);\draw[->,line width=0.25pt] (X23) -- (X29);\draw[->,line width=0.25pt] (12.15,17.2) -- (12.15,15.8);
\draw[->,line width=0.25pt] (11.85,15.8) -- (11.85,17.2);
\draw[->,line width=0.25pt] (X11) -- (X15);\draw[->,line width=0.25pt] (12.8,14.85) -- (17.4,12.55);
\draw[->,line width=0.25pt] (17.25,12.25) -- (12.7,14.55);
\draw[->,line width=0.25pt] (12.8,12.15) -- (17.2,12.15);
\draw[->,line width=0.25pt] (17.2,11.85) -- (12.8,11.85);
\draw[->,line width=0.25pt] (X15) -- (X19);
\draw[->,line width=0.25pt] (X15) -- (X20);
\draw[->,line width=0.25pt] (X20) -- (X26);
\draw[->,line width=0.25pt] (X19) -- (X26);
\draw[->,line width=0.25pt] (X26) -- (X22);\draw[->,line width=0.25pt] (X22) -- (X16);\draw[->,line width=0.25pt] (15.8,9.15) -- (17.2,9.15);
\draw[->,line width=0.25pt] (17.2,8.85) -- (15.8,8.85);
\draw[->,line width=0.25pt] (8.8,21.15) -- (17.2,21.15);
\draw[->,line width=0.25pt] (17.2,20.85) -- (8.8,20.85);
\draw[->,line width=0.25pt] (X28) -- (X29);
\draw[->,line width=0.25pt] (X29) -- (X30);
\draw[->,line width=0.25pt] (X30) -- (X13);\draw[->,line width=0.25pt] (X28) to[out=170,in=155] (X3);
\end{scope}
\end{tikzpicture}
\caption{Graph with 21 nodes.}
\label{Figure:21 nodes}
\end{minipage}
\begin{minipage}{0.2\linewidth}
\hspace{0.2cm}
\end{minipage}
\begin{minipage}{0.4\linewidth}
\centering
\begin{tikzpicture}[scale=0.35] 
\begin{scope}[>=latex]
\filldraw[fill=grau,line width=0.25pt] (12,24) circle (20pt) node (X1) {\tiny $\pmb{1}$};
\filldraw[fill=grau,line width=0.25pt] (8,21) circle (20pt) node (X2){\tiny $\pmb{3}$};
\filldraw[fill=grau,line width=0.25pt] (16,21) circle (20pt) node (X3){\tiny $\pmb{4}$};
\filldraw[fill=grau,line width=0.25pt] (12,18) circle (20pt) node (X4){\tiny $\pmb{6}$};
\filldraw[fill=grau,line width=0.25pt] (12,15) circle (20pt) node (X5){\tiny$\pmb{11}$};
\filldraw[fill=grau,line width=0.25pt] (8,12) circle (20pt) node (X6){\tiny $\pmb{28}$};
\filldraw[fill=grau,line width=0.25pt] (16,12) circle (20pt) node (X7){\tiny $\pmb{29}$};
\draw[->,line width=0.25pt] (X1) -- (X2)node[pos=0.25,left]{\tiny$\frac{5}{6}t^2$};
\draw[->,line width=0.25pt] (X1) -- (X3)node[pos=0.75,right]{\tiny$t$};
\draw[->,line width=0.25pt] (8.8,21.15) -- (15.2,21.15)node[pos=0.5,above]{\tiny$\sqrt{t}$};
\draw[->,line width=0.25pt] (15.2,20.85) -- (8.8,20.85)node[pos=0.5,below]{\tiny$\frac{4}{5}t^2$};
\draw[->,line width=0.25pt] (X2) -- (X4)node[pos=0.5,left]{\tiny$\sqrt{t}$};
\draw[->,line width=0.25pt] (X4) -- (X3)node[pos=0.25,right]{\tiny$t$};
\draw[->,line width=0.25pt] (12.15,17.2) -- (12.15,15.8)node[pos=0.5,right]{\tiny$\frac{2}{3}t$};
\draw[->,line width=0.25pt] (11.85,15.8) -- (11.85,17.2)node[pos=0.5,left]{\tiny$t$};
\draw[->,line width=0.25pt] (X6) -- (X7)node[pos=0.5,below]{\tiny$\sqrt{t}$};
\draw[->,line width=0.25pt] (8.15,20.2) -- (8.15,12.8)node[pos=0.85,right]{\tiny$\frac{13}{16}t$};
\draw[->,line width=0.25pt] (7.85,12.8) -- (7.85,20.2)node[pos=0.5,left]{\tiny$\frac{2}{3}t$};
\draw[->,line width=0.25pt] (X7) to[out=5,in=25] (X1)node[pos=0.5,right]{\tiny$\frac{4}{5}t$};
\draw[->,line width=0.25pt] (X3) -- (X7)node[pos=0.5,right]{\tiny$\frac{9}{10}t$};
\end{scope}
\end{tikzpicture}
\caption{Graph with 7 nodes.}
\label{Figure:7 nodes}
\end{minipage}
\vspace{-0.5cm}
\end{figure*}
The reduced graph has now 8 minimal cycles, where the longest maximal cycle has 5 nodes.

To establish ISS of the large network we can apply Corollary~\ref{thm:main} . Thus we need to verify $\tilde\Gamma(s)\not\geq s$ or equivalently the cycle condition \eqref{cycle} corresponding to reduced matrix $\tilde\Gamma$. 

The corresponding cycle condition looks as follows:

$\widetilde\gamma_{3,4}\circ\widetilde\gamma_{4,3}<\id$, 
$\widetilde\gamma_{3,4}\circ\widetilde\gamma_{4,6}\circ\widetilde\gamma_{6,3}<\id$, 
$\widetilde\gamma_{6,11}\circ\widetilde\gamma_{11,6}<\id$, 
$\widetilde\gamma_{28,3}\circ\widetilde\gamma_{3,28}<\id$, 
$\widetilde\gamma_{4,1}\circ\widetilde\gamma_{1,29}\circ\widetilde\gamma_{29,4}<\id$, 
$\widetilde\gamma_{4,3}\circ\widetilde\gamma_{3,1}\circ\widetilde\gamma_{1,29}\circ\widetilde\gamma_{29,4}<\id$, 
$\widetilde\gamma_{3,1}\circ\widetilde\gamma_{1,29}\circ\widetilde\gamma_{29,28}\circ\widetilde\gamma_{28,3}<\id$, 
$\widetilde\gamma_{3,4}\circ\widetilde\gamma_{4,1}\circ\widetilde\gamma_{1,29}\circ\widetilde\gamma_{29,28}\circ\widetilde\gamma_{28,3}<\id$.

Let us verify it:\\
\begin{equation*}
\begin{array}{lllll}
\widetilde\gamma_{3,4}\circ\widetilde\gamma_{4,3}(t)&=&\frac{4}{5}(\widetilde\gamma_{4,3}(t))^2=\frac{4}{5}\sqrt{t}^2=\frac{4}{5}t&<&t\\
\widetilde\gamma_{3,4}\circ\widetilde\gamma_{4,6}\circ\widetilde\gamma_{6,3}(t)&=&\frac{4}{5}(\widetilde\gamma_{4,6}\circ\widetilde\gamma_{6,3}(t))^2=\frac{4}{5}(\widetilde\gamma_{6,3}(t))^2&&\\
&=&\frac{4}{5}(\sqrt{t})^2=\frac{4}{5}t&<&t;\\
\widetilde\gamma_{6,11}\circ\widetilde\gamma_{11,6}(t)&=&\widetilde\gamma_{11,6}(t)=\frac{2}{3}t&<&t;\\
\widetilde\gamma_{28,3}\circ\widetilde\gamma_{3,28}&=&\frac{13}{16}\widetilde\gamma_{3,28}(t)=\frac{13}{24}t&<&t;\\
\widetilde\gamma_{4,1}\circ\widetilde\gamma_{1,29}\circ\widetilde\gamma_{29,4}(t)&=&\widetilde\gamma_{1,29}\circ\widetilde\gamma_{29,4}(t)=\frac{4}{5}\cdot 1\cdot \frac{9}{10}t=\frac{18}{25}t&<&t;\\
\widetilde\gamma_{4,3}\circ\widetilde\gamma_{3,1}\circ\widetilde\gamma_{1,29}\circ\widetilde\gamma_{29,4}(t)&=&\sqrt{\widetilde\gamma_{3,1}\circ\widetilde\gamma_{1,29}\circ\widetilde\gamma_{29,4}(t)}&&\\
&=&\sqrt{\frac{5}{6}(\widetilde\gamma_{1,29}\circ\widetilde\gamma_{29,4})^2}=\sqrt{\frac{54}{125}}t&<&t;\\  
\widetilde\gamma_{3,1}\circ\widetilde\gamma_{1,29}\circ\widetilde\gamma_{29,28}\circ\widetilde\gamma_{28,3}(t)&=&\frac{5}{6}(\widetilde\gamma_{1,29}\circ\widetilde\gamma_{29,28}\circ\widetilde\gamma_{28,3}((t))^2&&\\
&=&\frac{5}{6}(\frac{4}{5}\widetilde\gamma_{29,28}\circ\widetilde\gamma_{28,3}(t))^2=\frac{5}{6}(\frac{4}{5}\sqrt{\frac{13}{16}t})^2=\frac{13}{30}t&<&t;\\
\widetilde\gamma_{3,4}\circ\widetilde\gamma_{4,1}\circ\widetilde\gamma_{1,29}\circ\widetilde\gamma_{29,28}\circ\widetilde\gamma_{28,3}(t)&=&\frac{4}{5}(\widetilde\gamma_{4,1}\circ\frac{4}{5}\sqrt{\frac{13}{16}t})^2=\frac{52}{125}t&<&t.
\end{array}
\end{equation*}

Thus by Corollary~\ref{thm:main} the large network is ISS.

Assume now that we know also ISS-Lyapunov functions $V_i$ for all subsystems of the large network. Let us construct an ISS-Lyapunov function for the small network and the for the large network. Consider the following functions $\sigma_i$: \\
$\widetilde{\sigma}_i(t)=\left\{
\begin{array}{ll}
t,&i\neq 2,6;\\
t^2, &i=2,6.
\end{array}\right.
$ 

Let us check whether $\widetilde\Gamma(\widetilde{\sigma})\leq\widetilde{\sigma}$:

$\begin{array}{lllll}
\tilde\gamma_{17}(\widetilde{\sigma}_7(t))&=&\frac{2}{3}t&\leq&t=\widetilde{\sigma}_1(t)\\
\max\{\tilde\gamma_{21}(\widetilde{\sigma}_1(t)),\tilde\gamma_{23}(\widetilde{\sigma}_3(t)),\tilde\gamma_{26}(\widetilde{\sigma}_6(t))\}&=&\max\{\frac{1}{2}
t^2,\frac{1}{2}t^2,\frac{1}{2}t^2\}&\leq&t^2=\widetilde{\sigma}_2(t)\\
\max\{\tilde\gamma_{31}(\widetilde{\sigma}_1(t)),\tilde\gamma_{32}(\widetilde{\sigma}_2(t)),\tilde\gamma_{34}(\widetilde{\sigma}_4(t))\}&=&
\max\{t,t,t\}&\leq&t=\widetilde{\sigma}_3(t)\\
\max\{\tilde\gamma_{42}(\widetilde{\sigma}_2(t)),\tilde\gamma_{45}(\widetilde{\sigma}_5(t))\}&=&
\max\{t,t\}&\leq&t=\widetilde{\sigma}_4(t)\\
\tilde\gamma_{54}(\widetilde{\sigma}_4(t))&=&\frac{2}{3}t&\leq&t=\widetilde{\sigma}_5(t)\\
\tilde\gamma_{62}(\widetilde{\sigma}_2(t))&=&\frac{3}{16}t^2&\leq&t^2=\widetilde{\sigma}_6(t)\\
\max\{\tilde\gamma_{73}(\widetilde{\sigma}_3(t)),\tilde\gamma_{76}(\widetilde{\sigma}_6(t))\}&=&\max\{\frac{1}{2}t,t\}&\leq&t=\widetilde{\sigma}_7(t)\\
\end{array}$

Thus $\widetilde\Gamma(\widetilde{\sigma})\leq\widetilde{\sigma}$ and ${\sigma}$ such that $\Gamma({\sigma})\leq{\sigma}$ can be constructed applying rules for sequentially connected nodes  in \eqref{omega_ser}, nodes connected in parallel in \eqref{omega_parallel} and almost disconnected subgraphs in \eqref{omega_subgraph}:\\
$\begin{array}{lllll}
\sigma_1(t)&=&\tilde\sigma_1(t)&=&t\\
\sigma_2(t)&=&\gamma_{2,13}\circ\gamma_{13,30}\circ\gamma_{30,29}\circ\tilde\sigma_{29}(t)&=&\frac{2}{5}t\\
\sigma_3(t)&=&\tilde\sigma_3(t)&=&t^2\\
\sigma_4(t)&=&\tilde\sigma_4(t)&=&t\\
\sigma_5(t)&=&\gamma_{5,3}\circ\tilde\sigma_3(t)&=&\frac{1}{2}t^2\\
\sigma_6(t)&=&\tilde\sigma_6(t)&=&t\\
\sigma_7(t)&=&\gamma_{7,4}\circ\tilde\sigma_4(t)&=&\sqrt{t}\\
\sigma_8(t)&=&\gamma_{8,5}\circ\sigma_5(t)&=&\frac{1}{\sqrt{2}}t\\
\sigma_9(t)&=&\gamma_{9,5}\circ\sigma_5(t)&=&\sqrt{2}t\\
\sigma_{10}(t)&=&\gamma_{10,5}\circ\sigma_5(t)&=&\frac{1}{2\sqrt{2}}t\\
\sigma_{11}(t)&=&\tilde\sigma_{11}(t)&=&t\\
\sigma_{12}(t)&=&\gamma_{12,7}\circ\sigma_7(t)&=&t\\
\sigma_{13}(t)&=&\gamma_{13,30}\circ\gamma_{30,29}\circ\tilde\sigma_29(t)&=&4t^2\\
\sigma_{14}(t)&=&\tilde\gamma_{14,5}\circ\sigma_5(t)&=&\frac{13}{8}t^2 \\
\sigma_{15}(t)&=&\max\{\gamma_{15,11},\gamma_{15,16}\circ\gamma_{16,11}\}\circ\sigma_{11}(t)&=&t^2\\
\sigma_{16}(t)&=&\max\{\gamma_{16,11},\gamma_{16,15}\circ\gamma_{15,11},\gamma_{16,22}\circ\gamma_{22,26}\circ\gamma_{26,20},&&\\ &&\circ\gamma_{20,15}\circ\gamma_{15,11}, \gamma_{16,22}\circ\gamma_{22,26}\circ\gamma_{26,19}, \circ\gamma_{19,15}\circ\gamma_{15,11}\}\circ\sigma_{11}(t)&=&t\\
\sigma_{17}(t)&=&\gamma_{17,12}\circ\sigma_{12}(t)&=&3t\\
\sigma_{18}(t)&=&\gamma_{18,14}\circ\sigma_{14}(t)&=&\frac{\sqrt{13}}{2\sqrt{2}}t \\ \sigma_{19}(t)&=&\max\{\gamma_{19,15}\circ\gamma_{15,11},\gamma_{19,15}\circ\gamma_{15,16}\circ\gamma_{16,11}\}\circ\sigma_{11}(t)&=&t  \\
\sigma_{20}(t)&=&\max\{\gamma_{20,15}\circ\gamma_{15,11},\gamma_{20,15}\circ\gamma_{15,16}\circ\gamma_{16,11}\}\circ\sigma_{11}(t)&=&2t\\
\sigma_{21}(t)&=&\max\{\gamma_{21,22}\circ\gamma_{22,26}\circ\gamma_{26,20}\circ\gamma_{20,15}\circ\gamma_{15,11}, \gamma_{21,22}\circ\gamma_{22,26}\circ\gamma_{26,19}\circ\gamma_{19,15}\circ\gamma_{15,11},&&\\ &&\gamma_{21,22}\circ\gamma_{22,26}\circ\gamma_{26,20}\circ\gamma_{20,15}\circ\gamma_{15,16}\circ\gamma_{16,11},&&\\ &&\gamma_{21,22}\circ\gamma_{22,26}\circ\gamma_{26,19}\circ\gamma_{19,15}\circ\gamma_{15,16}\circ\gamma_{16,11}\}\circ\sigma_{11}(t)&=&\frac{4}{9}t\\
\sigma_{22}(t)&=&\max\{\gamma_{22,26}\circ\gamma_{26,20}\circ\gamma_{20,15}\circ\gamma_{15,11}, \gamma_{22,26}\circ\gamma_{26,19}\circ\gamma_{19,15}\circ\gamma_{15,11},&&\\
 &&\gamma_{22,26}\circ\gamma_{26,20}\circ\gamma_{20,15}\circ\gamma_{15,16}\circ\gamma_{16,11},&&\\ &&\gamma_{22,26}\circ\gamma_{26,19}\circ\gamma_{19,15}\circ\gamma_{15,16}\circ\gamma_{16,11}\}\circ\sigma_{11}(t)&=&\frac{4}{81}t^2 \\
\sigma_{23}(t)&=&\gamma_{23,17}\circ\sigma_{17}(t)&=&\sqrt{3t}\\
\end{array}$
$\begin{array}{lllll}
\sigma_{24}(t)&=&\gamma_{24,18}\circ\sigma_{18}(t)&=&\frac{13}{8}t^2 \\
\sigma_{25}(t)&=&\gamma_{25,18}\circ\sigma_{18}(t)&=&\frac{13}{8}t^2\\
\sigma_{26}(t)&=&\max\{\gamma_{26,20}\circ\gamma_{20,15}\circ\gamma_{15,11}, \gamma_{26,19}\circ\gamma_{19,15}\circ\gamma_{15,11},&&\\ &&\gamma_{26,20}\circ\gamma_{20,15}\circ\gamma_{15,16}\circ\gamma_{16,11},&&\\ &&\gamma_{26,19}\circ\gamma_{19,15}\circ\gamma_{15,16}\circ\gamma_{16,11}\}\circ\sigma_{11}(t)&=&\frac{2}{3}t \\
\sigma_{27}(t)&=&\gamma_{27,25}\circ\sigma_{25}(t)&=&\frac{\sqrt{13}}{2\sqrt{2}}t\\
\sigma_{28}(t)&=&\tilde\sigma_{28}(t)&=&t^2\\
\sigma_{29}(t)&=&\tilde\sigma_{29}(t)&=&t \\
\sigma_{30}(t)&=&\gamma_{30,29}\circ\sigma_{29}(t)&=&4t^2.
\end{array}$

Then an ISS-Lyapunov function for the large network can be constructed as $V(x)=\max\limits_{i=1,\ldots,n}\sigma_i^{-1}(V_i(x_i))$ using Theorem~\ref{thm:iss_lyap}.

\section{Conclusions}\label{sec:Conclusion} 

The aggregation rules introduced in this paper preserve the main structure of a network and allow to reduce the number of computations during the verification of the small gain condition. Furthermore, in the case that there exist several motifs in one network, these rules can be applied step-by-step to reduce the size of the gain matrix $\Gamma$. The sequence of the application of these rules may be arbitrary or depend on some additional information about the network topology. For example, this sequence may depend on information about the most influential nodes of the network, see \cite[Algorithm~1]{SWD11}.

In this paper we  have performed only initial steps in the development of a structure-preserving reduction approach for large-scale networks with nonlinear dynamics. The next steps are: extension of the aggregation rules to other types of motifs, introduction and estimation of the error measure that compares the reduced and the original models, and the development of a numerical algorithm that performs this reduction. Further improvement of the approach may be performed by adaptation of the ranking technique used in \cite{SWD09} , \cite{SWD09b} 
and \cite{SWD11}.


\begin{thebibliography}{99}

\bibitem{AlT05}
Altman~A. and Tennenholtz~M. 2005.
\emph{Ranking systems: the pagerank axioms.}
In Proceedings of the 6th ACM conference on Electronic commerce,  Vancouver, BC, Canada, pages 1--8.

\bibitem{AnA07}
Angeli~D. and Astolfi~A. 2007.
\emph{A tight small-gain theorem for not necessarily {ISS} systems.}
Systems \& Control Letters, 56(1):87--91.

\bibitem{Ant05}
Antoulas~A.~C. 2005.
\emph{Approximation of large-scale dynamical systems}. 
Volume~6 of Advances in Design and Control. Society for Industrial and Applied Mathematics (SIAM), Philadelphia,
  PA.

\bibitem{Ast10}
Astolfi~A. 2010.
\emph{Model reduction by moment matching for linear and nonlinear systems.}
IEEE Transactions on Automatic Control, 55(10):2321 --2336.


\bibitem{Ast04}
Astrid~P. 2004.
\emph{Reduction of process simulation models: a proper orthogonal decomposition approach}.
PhD thesis, Technische Universiteit Eindhoven.

\bibitem{Ban09}
Bang-Jensen~J. and Gutin~G. 2009.
\emph{Digraphs: Theory, Algorithms and Applications}.
Springer Monographs in Mathematics.

\bibitem{BeP79}
Berman~A. and Plemmons~R.J. 1979.
\emph{Nonnegative matrices in the mathematical sciences}.
Academic Press [Harcourt Brace Publishers], New York.

\bibitem{BiA06}
Biyik~E. and Arcak~M. 2008.
\emph{Area aggregation and time-scale modeling for sparse nonlinear networks.}
Systems \& Control Letters, 57(2):142--149.

\bibitem{BrD10}
Breiten~T. and Damm~T. 2010.
\emph{Krylov subspace methods for model order reduction of bilinear control
  systems.}
Systems \& Control Letters, 59(8):443 -- 450.

\bibitem{Cha05}
Chaves~M. 2005.
\emph{Input-to-state stability of rate-controlled biochemical networks}.
SIAM Journal on Control and Optimization, 44(2):704--727.

\bibitem{DRW10}
Dashkovskiy~S., R\"{u}ffer~B.S., and Wirth~F. 2010.
\emph{Small gain theorems for large scale systems and construction of {ISS}  {L}yapunov functions.}
SIAM Journal on Control and Optimization, 48(6):4089--4118.


\bibitem{DRW07}
Dashkovskiy~S., R\"{u}ffer~B.S., and Wirth~F.R. 2007.
\emph{{A}n {ISS} small gain theorem for general networks.}
Mathematics of Control, Signals, and Systems, 19(2):93--122.


\bibitem{GeW11}
Geiselhart~R. and Wirth~F.R. 2011.
\emph{Numerical construction of {LISS L}yapunov functions under a small gain condition}.
http://arxiv.org/abs/1105.1922.

\bibitem{HLB98}
Holmes~P., Lumley~J.L., and Berkooz~G. 1998.
\emph{Turbulence, coherent structures, dynamical systems and symmetry}.
Cambridge University Press.

\bibitem{HPS07}
Homescu~C., Petzold~L.R., and Serban~R. 2007.
\emph{Error estimation for reduced-order models of dynamical systems.}
SIAM Review, 49(2):277--299.

\bibitem{Ito08}
Ito~H. 2008.
\emph{A degree of flexibility in {L}yapunov inequalities for establishing input-to-state stability of interconnected systems.}
Automatica, 44(9):2340--2346.

\bibitem{ItJ07}
Ito~H. and Jiang~Z.-P. 2008.
\emph{Small-gain conditions and {L}yapunov functions applicable equally to i{ISS} and {ISS} systems without uniformity assumption.}
In American Control Conference, pages 2297--2303, Seattle, USA.


\bibitem{ItJ09}
Ito~H., and Jiang~Z.-P. 2009.
\emph{Necessary and sufficient small gain conditions for integral  input-to-state stable systems: {A} {L}yapunov perspective.}
IEEE Transactions on Automatic Control, 54(10):2389--2404.


\bibitem{JMW96}
Jiang~Z.-P., Mareels~I.M.Y., and Wang~Y. 1996.
\emph{A {L}yapunov formulation of the nonlinear small-gain theorem for interconnected {ISS} systems.}
Automatica, 32(8):1211--1215.

\bibitem{JTP94}
Jiang~Z.-P., Teel~A.R., and Praly~L. 1994.
\emph{Small-gain theorem for {ISS} systems and applications.}
Mathematics of Control, Signals and Systems, 7(2):95--120.

\bibitem{JiW08}
Jiang~Z.-P. and Wang~Y. 2008.
\emph{A generalization of the nonlinear small-gain theorem for large-scale
  complex systems.}
In Proceedings of the 2008 World Congress on Intelligent Control and Automation (WCICA), pages 1188--1193, Chongqing, China.

\bibitem{Jon75}
Johnson~D.B. 1975.
\emph{Finding all the elementary circuits of a directed graph.}
SIAM Journal on Computing, 4:77--84.

\bibitem{KaJ09}
Karafyllis~I., and Jiang~Z.-P. 2011.
\emph{A vector small-gain theorem for general nonlinear control systems}.
IMA J. Math. Control \& Information, 28(3):309--344.


\bibitem{KKS10}
Kazantzis~N., Kravaris~C., and Syrou~L. 2010.
\emph{A new model reduction method for nonlinear dynamical systems.}
Nonlinear Dynamics, 59(1-2):183--194.

\bibitem{KOS76}
Kokotovi{\'c}~P.V., O'Malley~R.E., Jr., and Sannuti~P. 1976.
\emph{Singular perturbations and order reduction in control theory - an overview.}
Automatica, 12(2):123--132.

\bibitem{Kos11}
Kosmykov~M. 2011.
\emph{Hybrid dynamics in large-scale logistics networks}.
PhD thesis, Fachbereich 3 (Mathematik \& Informatik) der Universit{\"a}t Bremen.
  
\bibitem{LaN03}
Laila~D.S. and Ne{\v{s}}i{\'c}~D.  2003.
\emph{Discrete-time {L}yapunov-based small-gain theorem for parameterized  interconnected {ISS} systems.}
IEEE Transactions on Automatic Control, 48(10):1783--1788.

\bibitem{LKM03}
Lall~S., Krysl~P., and Marsden~J.E. 2003.
\emph{Structure-preserving model reduction for mechanical systems}.
Physica D, 184(1-4):304--318. Complexity and nonlinearity in physical systems (Tucson, AZ, 2001).

\bibitem{LMG02}
Lall~S., Marsden~J.E., and Glava{\v{s}}ki~S. 2002.
\emph{A subspace approach to balanced truncation for model reduction of nonlinear control systems.}
International Journal of Robust and Nonlinear Control, 12(6):519--535.

\bibitem{motifs}
Milo~R., Shen-Orr~S., Itzkovitz~S., Kashtan~N., Chklovskii~D., and Alon~U. 2002.
\emph{Network motifs: Simple building blocks of complex networks.}
Science, 298:824--827. 

\bibitem{Phi03}
Phillips~J.R. 2003.
\emph{Projection-based approaches for model reduction of weakly nonlinear, time-varying systems}.
IEEE Transactions on Computer-Aided Design of Integrated Circuits and Systems, 22(2):171 -- 187. 

\bibitem{ReW06}
Rewie{\'n}ski~M., and White~J. 2006.
\emph{Model order reduction for nonlinear dynamical systems based on trajectory piecewise-linear approximations}.
Linear Algebra and its Applications, 415(2-3):426--454.

\bibitem{Rueff07}
R\"{u}ffer~B.S. 2007.
\emph{Monotone Systems, Graphs and Stability of Large-Scale Interconnected Systems}.
PhD Thesis, Universit\"{a}t Bremen, Germany.

\bibitem{Sal05}
Salimbahrami~B. 2005.
\emph{Structure Preserving Order Reduction of Large Scale Second Order Models}.
PhD thesis, Technischen Universit\"{a}t M\"{u}nchen.

\bibitem{Sch93}
Scherpen~J.M.A. 1993.
\emph{Balancing for nonlinear systems}.
Systems \& Control Letters, 21(2):143--153. 

\bibitem{SVH08}
Schilders~W.H.A.,  van~der Vorst~H.A., and Rommes~J., editors. 2008.
\emph{Model order reduction: theory, research aspects and
  applications}.
Volume~13 of {\em Mathematics in Industry}. Springer-Verlag, Berlin. European Consortium for Mathematics in Industry (Berlin).

\bibitem{SWD09}
Scholz-Reiter~B., Wirth~F., Dashkovskiy~S., Kosmykov~M., Makuschewitz~T.,  and Sch\"{o}nlein~M.  2011.
\emph{Application of the {P}age{R}ank algorithm for ranking locations of a production network.}
In 42nd CIRP Conference on Manufacturing Systems, Grenoble,  France, June 2009. Proceedings CD.

\bibitem{SWD09b}
Scholz-Reiter~B., Wirth~F., Dashkovskiy~S., Makuschewitz~T., Sch\"{o}nlein~M.,  and Kosmykov~M. 2011.
\emph{An approach to model reduction of logistic networks based on ranking}.
In Hans-Jörg Kreowski, Bernd Scholz-Reiter, and Klaus-Dieter Thoben, editors, Dynamics in Logistics, pages 91--103. Springer Berlin
  Heidelberg.

\bibitem{SWD11}
Scholz-Reiter~B., Wirth~F., Dashkovskiy~S., Makuschewitz~T., Sch\"{o}nlein~M.,  and Kosmykov~M. 2011. 
\emph{Structure-preserving model reduction of large-scale logistics networks - applications for supply chains}. 
The European Physical Journal B,  84(4):501--520.

\bibitem{Son89}
Sontag~E.D. 1989. 
\emph{Smooth stabilization implies coprime factorization}. 
IEEE Transactions on Automatic Control, 34(4):435--443.

\bibitem{Tar72}
Tarjan~R.E 1972. 
\emph{Enumeration of the elementary circuits of a directed graph}. Technical report, Ithaca, NY, USA.




%
%


%
%
%
%
%
%
%
%
%
%
%
%
%
%
%
%
%
%
%
%
%
%
%
%
%
%
%
%
%
%
%
%
%
%
%
%
%
%
%
%
%
%
%
%
%
%
%
%
%
%
%
%
%
%
%
%
%
%
%
%
%
%
%
%
%
%
%
%
%
%
%
%
%
%
%
%
%
%
%
%
%
%


\end{thebibliography}
\end{document}